\newtheorem{claim}{Claim}[section]
\newtheorem{conj}[claim]{Conjecture} 
\newtheorem{thm}{Theorem}
\newtheorem{prop}[claim]{Proposition}
\newtheorem{cor}[claim]{Corollary}
\newtheorem{lem}[claim]{Lemma}
\theoremstyle{definition}
\newtheorem{defi}[claim]{Definition} 
\newtheorem{rem}[claim]{Remark}
\newtheorem{ex}[claim]{Example}
\numberwithin{equation}{section}
\newcommand{\case}[1]{\vspace{0.3cm}\noindent\framebox{#1.}}
\newcommand{\F}{{\mathbb F}}
\newcommand{\Z}{{\mathbb Z}}
\newcommand{\Q}{{\mathbb Q}}
\newcommand{\Qb}{{\overline{\Q}}}
\newcommand{\C}{{\mathbb C}}
\newcommand{\N}{{\mathbb N}}
\newcommand{\Gal}{{\mathrm{Gal}}}
\newcommand{\disc}{{\mathrm{disc}}}
\renewcommand{\Cup}{\bigcup}
\newcommand{\isom}{{\xrightarrow{\sim}}}
\newcommand{\lcm}{{\mathrm{lcm}}}
\numberwithin{equation}{section}
\author{Alexei Entin}
\address{Raymond and Beverly Sackler School of Mathematical Sciences, Tel Aviv University, Tel Aviv 69978, Israel}
\email{aentin@tauex.tau.ac.il}
\title{Lower bounds on the least common multiple of a polynomial sequence and its radical}
\begin{document}
\maketitle
\begin{abstract}
    Cilleruelo conjectured that for an irreducible polynomial $f \in \mathbb{Z}[X]$ of degree $d \geq 2$, denoting $$L_f(N)=\mathrm{lcm}(f(1),f(2),\ldots f(N))$$ one has $$\log L_f(n)\sim(d-1)N\log N.$$ He proved it in the case $d=2$ but it remains open for every polynomial with $d>2$.
    While the tight upper bound $\log L_f(n)\lesssim (d-1)N\log N$ is known, the best known general lower bound due to Sah is $\log L_f(n)\gtrsim N\log N.$

    We give an improved lower bound for a special class of irreducible polynomials, which includes the decomposable irreducible polynomials $f=g\circ h,\,g,h\in\Z[x],\deg g,\deg h\ge 2$, for which we show $$\log L_f(n)\gtrsim \frac{d-1}{d-\deg g}N\log N.$$

    We also improve Sah's lower bound $\log\ell_f(N)\gtrsim \frac 2dN\log N$ for the radical $\ell_f(N)=\mathrm{rad}(L_f(N))$ for all irreducible $f$ with $d\ge 3$ and give a further improvement
    for polynomials $f$ with a small Galois group and satisfying an additional technical condition, as well as for decomposable polynomials. 
\end{abstract}

\section{Introduction}
The study of the least common multiple of an integer polynomial sequence goes back to Chebychev, who observed that  $\log\lcm\left(1,2, ,\ldots, N\right) \sim N$ is equivalent to the prime number theorem and gave lower and upper bounds for this quantity. This problem later inspired a more general problem of studying the least common multiple of polynomial sequences. For a linear polynomial $f(x) = qx + a,\ k,a\in\Z,\ q>0, q+a > 0$ it was observed by Bateman and Kalb \cite{BKS02}, that
$\textrm{log lcm}\left(f\left(1\right),f(2),\ldots, f\left(N\right)\right) \sim c_f N$ as $N\to\infty$, where $c_f = \frac{q}{\varphi(q)} \sum_{1 \leq m \leq q,(m,q) = 1} \frac{1}{m}$, which is a consequence of the prime number theorem for arithmetic progressions. 

Cilleruelo \cite{Cil11} studied the quantity
\begin{equation}\label{eq: cilleruelo}L_f(N)=\textrm{lcm}\left(f\left(1\right),f(2),..., f\left(N\right)\right)\end{equation} for an irreducible polynomial $f\in\Z[x]$ of degree $d\ge 2$. In the case $d=2$ he proved the asymptotic $\log L_f(N)\sim N\log N$ and made the following
\begin{conj}\label{conj: cilleruelo}Let $f\in\Z[x]$ be a fixed irreducible polynomial of degree $d\ge 2$. Then \begin{equation}\label{eq: conj cil}\log L_f(N)\sim \left(d-1\right)N \log N\end{equation} as $N\to\infty$.\end{conj}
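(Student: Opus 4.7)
The plan is to translate Conjecture \ref{conj: cilleruelo} into an estimate for a prime-indexed ``multiplicity defect'' and analyze it range-by-range. Since $|f(n)|\sim c\,n^d$ for the leading coefficient $c$, summation by parts gives $\sum_{n=1}^{N}\log|f(n)|=dN\log N+O(N)$, so \eqref{eq: conj cil} is equivalent to the asymptotic
\[
D(N):=\sum_{n=1}^{N}\log|f(n)|-\log L_f(N)\sim N\log N.
\]
The defect admits the prime decomposition
\[
D(N)=\sum_{p}\Bigl(\sum_{n=1}^{N}v_p(f(n))-\max_{1\le n\le N}v_p(f(n))\Bigr)\log p,
\]
in which the $p$-th summand is a ``multiplicity overcount,'' vanishing whenever only one $n\le N$ satisfies $p\mid f(n)$ and does so with $v_p$ equal to the maximum. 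Because the known upper bound $\log L_f(N)\le(d-1+o(1))N\log N$ yields $D(N)\ge(1-o(1))N\log N$, the task is to prove the complementary upper bound $D(N)\le(1+o(1))N\log N$.

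I would split the sum over primes into three ranges: small ($p\le N$), medium ($N<p\le c'N^d$), and large ($p>c'N^d$); the last contributes $0$ since $|f(n)|\le c'N^d$ for $n\le N$. For the small primes, setting $r_{p^k}:=\#\{a\bmod p^k:f(a)\equiv 0\pmod{p^k}\}$ and using Hensel's lemma away from $\disc(f)$, one has $\sum_{n=1}^N v_p(f(n))=\sum_{k\ge 1}r_{p^k}\lfloor N/p^k\rfloor\approx r_pN/(p-1)$. Since the average value of $r_p$ is $1$ for irreducible $f$ by Chebotarev, $\sum_{p\le N}r_p\log p/p=\log N+O(1)$, while the subtracted maximum-valuation term is trivially $O(\sum_{p\le N}\log p)=O(N)$. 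Thus the contribution of the small primes to $D(N)$ appears to be $(1+o(1))N\log N$, already matching the target on the nose.

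The main obstacle lies in the medium range $N<p\le c'N^d$, which accounts for the bulk $\sim(d-1)N\log N$ of $\sum_n\log|f(n)|$. To conclude $D(N)\le(1+o(1))N\log N$ one must show that the medium-prime contribution to the defect is $o(N\log N)$; equivalently, that for nearly all primes $p\in(N,c'N^d]$ which divide some $f(n)$ with $n\le N$, exactly one root of $f\bmod p$ falls in the short interval $[1,N]\subset[1,p]$. This is an equidistribution statement for roots of $f$ modulo $p$ in intervals of length $N\ll p$, and for $d\ge 3$ it is well beyond the reach of Chebotarev-style results, which control only the aggregate number of roots $r_p$, not their location. Cilleruelo's proof in the case $d=2$ exploits tools specific to quadratic orders --- prime number theorems in ideal classes, sharp asymptotics for class-number sums, and the symmetry of the two roots of $f\bmod p$ about a fixed centre --- none of which has a direct analogue for higher degree. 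A proof in general would presumably require genuinely new input, such as a Burgess-type bound for the location of roots of $f\pmod{p}$ in short intervals or a sieve identity tailored to $\prod_{n\le N}f(n)$; this is precisely the step at which any natural plan stalls, which is consistent with the paper's improvements being restricted to special classes of $f$ (notably the decomposable ones).
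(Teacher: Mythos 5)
The statement is Conjecture \ref{conj: cilleruelo}, which the paper does not prove: it is open for every $d\ge 3$, and the paper explicitly notes that no polynomial of degree $d>2$ is even known to satisfy it. There is therefore no ``paper's own proof'' to compare against, and your note is, correctly, an analysis of the obstruction rather than a proof. Your reformulation in terms of $D(N)=\sum_{n\le N}\log|f(n)|-\log L_f(N)=\sum_p\bigl(\sum_{n\le N}v_p(f(n))-\max_{n\le N}v_p(f(n))\bigr)\log p$ is valid; Cilleruelo's upper bound does yield $D(N)\ge(1-o(1))N\log N$; and the small-prime estimate via Hensel plus Chebotarev, giving $(1+o(1))N\log N$ for the $p\le N$ portion with the subtracted maximum-valuation term $O(N)$, is the standard calculation underlying the $p=O(N)$ analysis in \cite{Cil11,MaRu21,Sah20}. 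Your three-way split is a legitimate variant of theirs and gives the same leading-order bookkeeping.

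Your diagnosis of where the plan stalls is also exactly the open problem. Proving $D(N)\le(1+o(1))N\log N$ requires that the primes in $N<p\ll N^d$ contribute $o(N\log N)$ to the defect, i.e.\ that for almost all such $p$ at most one root of $f\bmod p$ falls in the short interval $[1,N]\subset[0,p)$. Heuristically (treating the $d$ roots of $f\bmod p$ as random in $[0,p)$) the expected weighted count of coincidences is $\ll N^2\sum_{p>N}\log p/p^2\ll N$, so this large-prime defect should in fact be $O(N)$; but no unconditional result controls the joint distribution of the roots of $f\bmod p$ as $p$ ranges over $p\gg N$, which is why the conjecture remains open for every $d\ge 3$. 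The paper's machinery (Propositions \ref{prop: main1}, \ref{prop: main2} and Lemma \ref{lem: sah}) attacks exactly the quantity you flag --- the number of $n\le N$ with $p^k\mid f(n)$ for $p$ large --- but is calibrated only to produce improved lower-bound constants $c<d-1$ in $\log L_f(N)\gtrsim cN\log N$ for special classes of $f$, not to close the gap to $d-1$ in general.
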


\noindent {\bf Convention.} Throughout the rest of the paper in all asymptotic notation the polynomial $f$ is assumed fixed, while the parameter $N$ is taken to infinity.
\\ \\
At present there are no known examples of polynomials of degree $d > 2$ for which the conjecture is known to hold, nor is it known whether such a polynomial even exists. Cilleruelo's argument also shows the predicted upper bound, i.e. if $f \in \mathbb{Z}[X]$ is an irreducible polynomial of degree $d \geq 2$, then
$$\log L_f(N)\lesssim (d-1)N\log N.$$
Maynard and Rudnick \cite{MaRu21} provided a lower bound of the correct order of magnitude:
\begin{equation}\label{eq:sah_lower_bound}\log L_f(N) \gtrsim \frac{1}{d} N \log N.\end{equation}
Sah \cite{Sah20} improved this lower bound to
\begin{equation}\label{eq: sah lower bound L}\log L_f(N)\gtrsim N\log N.\end{equation} He also studied the radical of the least common multiple
$$\ell_f(N)=\mathrm{rad}(L_f(N))=\prod_{p|f(1)f(2)\ldots f(N)\atop{\mathrm{prime}}}p$$ and made the following conjecture:

\begin{conj}\label{conj: sah}Let $f\in\Z[x],\,\deg f=d\ge 2$ be irreducible. Then $$\log\ell_f(N)\sim\log L_f(N)\sim (d-1)N\log N.$$\end{conj}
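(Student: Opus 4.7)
My plan is to split Conjecture~\ref{conj: sah} into its two assertions: the Cilleruelo-type asymptotic $\log L_f(N)\sim(d-1)N\log N$ and the saturation statement $\log\ell_f(N)\sim\log L_f(N)$. Both should be attacked through the exact identity
\[
\sum_{n=1}^N\log|f(n)|=\sum_p\log p\cdot T_p(N),\qquad T_p(N):=\sum_{n=1}^N v_p\bigl(f(n)\bigr),
\]
whose left-hand side equals $dN\log N+O(N)$ by an elementary computation. Since $v_p(L_f(N))=m_p(N):=\max_{n\le N} v_p(f(n))$, the conjecture reduces to controlling the three quantities $T_p(N)$, $m_p(N)$ and $\mathbf{1}_{p\mid L_f(N)}$ simultaneously as $p$ varies.

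For the saturation part, I need $\sum_p(m_p(N)-1)\log p\cdot\mathbf{1}_{p\mid L_f(N)}=o(N\log N)$. For $p\le N^{1/(d-1)}$ the trivial bound $m_p(N)=O(\log N/\log p)$ already keeps the contribution below $O(N^{1/(d-1)})$. For larger $p$, any valuation $v_p(f(n))\ge 2$ forces $p^2\mid f(n)$, and distinct $n,m$ with $p^2\mid f(n)f(m)$ satisfy $p^2\mid\res\bigl(f(X),f(X+m-n)\bigr)$; combining this resultant constraint with a uniform Hensel count for $p\nmid\disc(f)$ should reduce this half of the conjecture to a sieve of essentially the same shape as Sah's argument for $\log\ell_f(N)\gtrsim\frac{2}{d}N\log N$, plausibly yielding an $O\bigl(N(\log N)^{1-\varepsilon}\bigr)$ bound.

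For the main asymptotic, Hensel lifting gives $\#\{n\le N:p^k\mid f(n)\}=\rho_f(p)N/p^k+O(\rho_f(p))$ for $p\nmid\disc(f)$, where $\rho_f(p)$ counts roots of $f$ mod $p$, so $T_p(N)=\rho_f(p)N/(p-1)+O(\rho_f(p)\log N)$. Summed against $\log p$, the Chebotarev-type average $\sum_{p\le x}\rho_f(p)\log p/p\sim\log x$ recovers $dN\log N$ on both sides when cut off at $p\asymp N^d$, and the conjecture becomes equivalent to $\sum_p\bigl(T_p(N)-m_p(N)\bigr)\log p\sim N\log N$. Heuristically, for "most" primes each collision $p\mid f(n),f(m)$ with $n\ne m$ contributes one extra unit to $T_p(N)$ without raising $m_p(N)$, producing exactly the predicted shaving factor $(d-1)/d$.

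The main obstacle---and the reason Conjecture~\ref{conj: sah} is open for $d\ge 3$---is the middle range $N^{1/d}\le p\le N$: there both $T_p(N)$ and $m_p(N)$ can be of comparable size, and the currently available sieve and resultant techniques yield only one-sided inequalities rather than matching asymptotics. Any proof seems to require either a quantitative refinement of Hensel's lemma controlling the joint distribution of $v_p(f(n))$ uniformly in $p$ and $n$, or a genuinely new identity relating the maximum and the sum of $p$-adic valuations along a polynomial sequence. I would expect real progress on Conjecture~\ref{conj: sah} to arrive only together with at least a conditional resolution of Cilleruelo's Conjecture~\ref{conj: cilleruelo}.
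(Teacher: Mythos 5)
The statement you were asked to prove is Conjecture~\ref{conj: sah}, which the paper states as an \emph{open} conjecture; there is no proof of it in the paper, and you have correctly recognized this. Your text is a strategy sketch rather than a proof, and your conclusion that the conjecture remains out of reach is the right one. The framework you describe --- the identity $\sum_{n\le N}\log|f(n)|=\sum_p T_p(N)\log p$ with $T_p(N)=\sum_{n\le N}v_p(f(n))$, Hensel lifting plus Chebotarev for small primes, $v_p(L_f(N))=m_p(N)=\max_{n\le N}v_p(f(n))$, and the impasse in the range $N^{1/d}\lesssim p\lesssim N$ where $T_p$ and $m_p$ can be of comparable size --- is an accurate account of where the difficulty lies, and is consistent with the paper, which obtains only one-sided lower bounds (Theorems~\ref{thm: main0}--\ref{thm: main2}) precisely by bounding, for \emph{large} primes $p>DN$, the number of $n\le N$ with $p^k\mid f(n)$.

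Two remarks on the details. First, the paper records (citing \cite[Remark 6.2]{EnLa23_2}) that the saturation half $\log\ell_f(N)\sim\log L_f(N)$ is known conditionally on the ABC conjecture, so under ABC, Conjecture~\ref{conj: sah} is equivalent to Conjecture~\ref{conj: cilleruelo}; your proposal does not mention this reduction, and it is the most concrete partial result available. Second, your proposed route to saturation via the resultant constraint $p^2\mid\res\bigl(f(X),f(X+m-n)\bigr)$ together with a Sah-type sieve is speculative as stated: the claim that it ``plausibly'' yields $O\bigl(N(\log N)^{1-\varepsilon}\bigr)$ is not justified, and indeed the same middle range of primes that blocks the main asymptotic also obstructs unconditional control of the square-divisibility count. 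So your write-up is an honest assessment of the problem's status, not a proof, and no proof should be expected here.
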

He proved the following lower bound:
\begin{equation}\label{eq: sah lower bound l}\log \ell_f(N) \gtrsim \frac 2d N \log N.\end{equation}
 We note that conditional on the ABC conjecture one can show that $\log\ell_f(N)\sim\log L_f(N)$, see \cite[Remark 6.2]{EnLa23_2}. 

Rudnick and Zehavi \cite{RuZe20} established an averaged form of (\ref{eq: conj cil}) with $f$ varying in a suitable range which depends on $N$.  Leumi \cite{Leu21_}, as well as the author and Landsberg \cite{EnLa23_2} studied a function field analog of the problem. In the latter work the analog of Conjecture \ref{conj: cilleruelo} was established for a special class of polynomials (which includes many polynomials of degree $d>2$) and it was shown that the analogs of conjectures \ref{conj: cilleruelo} and \ref{conj: sah} are equivalent. Technau \cite{Tec24_1} derived improved lower bounds on $L_f(N),\ell_f(N)$ for even polynomials (i.e. $f(x)=f(-x)$) and a further improved lower bound on $\ell_f(N)$ for the cyclotomic polynomial $f=x^{2^k}+1$.

In the present work we slightly improve the bound (\ref{eq: sah lower bound l}) and give further improved lower bounds on $L_f(N),\ell_f(N)$ for polynomials of special form. Our first main result is the following

\begin{thm}\label{thm: main0} Let $f\in\Z[x],\,\deg f=d\ge 3$ be irreducible. Then
\begin{equation}\label{eq: main0}\log\ell_f(N)\gtrsim \frac{d-1}{V_d}N\log N,\end{equation}
where
\begin{equation}\label{eq: Vd}V_d=\sum_{k=1}^{\min(\lfloor 3d/4\rfloor,d-2)}(d-k)=\left[\begin{array}{ll}2,& d=3,\\
5,&d=4,\\
{ \lfloor\frac{3d}4\rfloor\left(d-\frac{\lfloor\frac{3d}4\rfloor+1}2\right)},&d\ge 5.\end{array}\right.\end{equation}
\end{thm}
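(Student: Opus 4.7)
The plan is to sharpen Sah's bound $\log\ell_f(N)\gtrsim \frac{2}{d}N\log N$ by simultaneously exploiting several iterated divided differences of $f$. Define $T_p=T_p(N)=\#\{n\leq N:p\mid f(n)\}$, and set $g_0=f$ together with the inductively-defined polynomials
$$g_k(x_0,\ldots,x_k)=\frac{g_{k-1}(x_0,x_2,\ldots,x_k)-g_{k-1}(x_1,x_2,\ldots,x_k)}{x_0-x_1}\in\Z[x_0,\ldots,x_k],$$
symmetric of degree $d-k$ in each variable. If $p>N$ is a prime and $n_0,\ldots,n_k\in[1,N]$ are distinct integers with $p\mid f(n_i)$ for all $i$, then $p\mid g_k(n_0,\ldots,n_k)$, while $|g_k(n_0,\ldots,n_k)|\leq C_k N^{d-k}$; in particular $T_p\leq k$ whenever $p>CN^{d-k}$ and $g_k$ does not vanish on the tuple.

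First I would establish the asymptotic first-moment estimate
$$\sum_{p>CN}T_p\log p\;\gtrsim\;(d-1)N\log N$$
via the decomposition $\sum_{n\leq N}\log|f(n)|\sim dN\log N$, the Chebotarev-type asymptotic $\sum_{p\leq CN}\log p\sum_n v_p(f(n))\sim N\log N$, and control of the higher prime powers of large primes via $\#\{n\leq N:p^2\mid f(n)\}\ll N/p^2+1$. This is the analog of the main input in Sah's argument.

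Next I would combine this with higher-moment upper bounds. Summing the divisibility $p\mid g_k(n_0,\ldots,n_k)$ over all $(k+1)$-subsets $\{n_0,\ldots,n_k\}\subset[1,N]$ yields
$$\sum_{p>CN}\binom{T_p}{k+1}\log p\;\leq\;\binom{N}{k+1}(d-k)\log N\cdot(1+o(1)),\qquad k=1,\ldots,K,$$
after verifying that the ``degenerate'' tuples (on which $g_k$ vanishes) lie in a proper subvariety of $[1,N]^{k+1}$ and contribute negligibly via a Schwartz--Zippel-style argument. Together with the layer constraint $T_p\leq k$ for $p\in(CN^{d-k},CN^{d-k+1}]$, these feed into a linear programming problem in the variables $S_j=\sum_{T_p=j,\,p>CN}\log p$. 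Minimizing $\log\ell_f(N)\geq \sum_j S_j$ subject to the first-moment lower bound and the higher-moment upper bounds for $k=1,\ldots,K:=\min(\lfloor 3d/4\rfloor,d-2)$ gives the target value $\frac{d-1}{V_d}N\log N$, with $V_d=\sum_{k=1}^K(d-k)$ emerging from the LP optimum; the cutoff $K$ is exactly where further levels cease to strengthen the bound, due to the growing codimension and complexity of the subvarieties $\{g_k=0\}$.

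The main obstacle is the last paragraph: carrying out the LP optimization, identifying $(d-1)/V_d$ as the optimum, and verifying that $K=\min(\lfloor 3d/4\rfloor,d-2)$ is the correct cutoff. The dimension-counting needed to show that the degenerate-tuple contributions to the higher-moment estimates are truly $o(N^{k+1}\log N)$ is also delicate. A secondary issue is ensuring the squarefree defect $\sum_{p>CN}\log p\bigl(\sum_n v_p(f(n))-T_p\bigr)$ is $o(N\log N)$, which is classical for ``generic'' $f$ but requires some care in the mesoscopic range $p\in(CN,CN^{d/2}]$.
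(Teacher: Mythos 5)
There is a genuine gap, and it sits precisely where you flagged it as a "secondary issue." Your first-moment estimate $\sum_{p>CN}T_p\log p\gtrsim(d-1)N\log N$, where $T_p$ counts $n\le N$ \emph{without} multiplicity, is not provable by the sketch you give and is in fact not known. The gap between this and the provable first moment $\sum_{p>CN}\bigl(\sum_{n\le N}v_p(f(n))\bigr)\log p\sim(d-1)N\log N$ is exactly the "squarefree defect" $\sum_{p>CN}\sum_n\sum_{k\ge 2}\mathbf 1[p^k\mid f(n)]\log p$, and the trivial bound $\#\{n\le N:p^2\mid f(n)\}\ll N/p^2+1$ summed over the range $CN<p\le N^{d/2}$ gives $\gg N^{d/2}$, which is not $o(N\log N)$ for $d\ge 3$. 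Indeed, if your first moment were true, then the trivial bound $T_p\le d$ alone (no LP, no divided differences) would give $\log\ell_f(N)\gtrsim\frac{d-1}{d}N\log N$, which for $d\ge 4$ is far stronger than the theorem and is out of reach; this is essentially the content of Sah's open Conjecture (and of the ABC-conditional statement $\log\ell_f\sim\log L_f$). The paper avoids this by working with the multiplicity-weighted first moment throughout, so that the denominator that appears is $V=\sum_k\delta_k$, where $\delta_k$ bounds $\#\{n\le N:p^k\mid f(n)\}$, not $T_p$.

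The second missing ingredient is the source of the cutoff $K=\min(\lfloor 3d/4\rfloor,d-2)$. You attribute it to "growing codimension and complexity of the subvarieties $\{g_k=0\}$," but that is not where it comes from. The paper invokes a specific deep analytic input (Proposition \ref{prop: k-free}): Hooley's result that $f(n)$ is $(d-1)$-power-free for almost all $n$, and Browning's improvement to $(\lfloor 3d/4\rfloor+1)$-power-free. This is what lets one truncate $v_p(f(n))$ at $\min(d-2,\lfloor 3d/4\rfloor)$ for all but $o(N)$ of the $n$, and it is where the upper limit of the sum defining $V_d$ comes from; it is not a consequence of any geometric argument about divided-difference varieties. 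Without this input, your summation would naturally run to $k=d-1$, reproducing Sah's $\sum_{k=1}^{d-1}(d-k)=\binom d2$ in the denominator, i.e.\ the weaker $\frac 2d$ bound. Your divided-difference polynomials $g_k$ are the right gadget (they are essentially the Lagrange-interpolation quantity in Lemma \ref{lem: sah}, used to show $p^k$ divides at most $d-k$ of the values), and the LP framing is a reasonable way to think about it, but as presented the argument neither proves its main input nor explains the $\lfloor 3d/4\rfloor$ cutoff, and therefore does not establish the theorem.
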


The constant in (\ref{eq: main0}) is bigger than the constant in (\ref{eq: sah lower bound l}) for all $d\ge 3$. For large $d$ it is asymptotic to $\frac{32}{15}\cdot\frac 1d$. 

Before stating our next main result, we need the following

\begin{defi}
    A non-empty finite subset $S=\{\alpha_1,\ldots,\alpha_n\}\subset\overline\Q$ is called \emph{potent} if there exist positive integers $a_1,\ldots,a_n$ such that $\sum_{i=1}^na_i\alpha_i\in\Z$.
\end{defi}

\begin{ex}\label{ex: potent}
    Let $h\in\Z[x],\,\deg h\ge 2$ be a polynomial, $a\in\overline\Q$. Then the set of roots of $h(x)-a$ in $\Qb$ is potent.
    Indeed write $h=\sum_{i=0}^sh_ix^i,\,h_s\neq 0$ and let $\alpha_1,\ldots,\alpha_k$ be the distinct roots of $h(x)-a$ with $\alpha_i$ having multiplicity $a_i$. Then by Vieta's formula $\sum_{i=1}^k|h_sa_i|\alpha_i=\pm h_{s-1}\in\Z$.
\end{ex}

Our second main result is the following

\begin{thm}\label{thm: main1} Let $f\in\Z[x],\,\deg f=d\ge 4$ be irreducible and assume that its set of roots $S$ in $\overline\Q$ can be partitioned into (non-empty) potent subsets $S_1,\ldots,S_r$. Then
\begin{enumerate}\item[(i)]
$$\log L_f(N)\gtrsim \frac{d-1}{d-r}N\log N.$$
\item[(ii)]
$$\log\ell_f(N)\gtrsim\frac{d-1}{V_{d,r}}N\log N,$$ where
\begin{equation}\label{eq: Vdr}V_{d,r}=\sum_{k=1}^{\min(\lfloor 3d/4\rfloor,d-2)}(d-\max(k,r))\end{equation}
(note that $V_{d,r}<V_d$ whenever $r>1$).\end{enumerate}\end{thm}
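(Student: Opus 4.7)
My approach follows Sah's framework, refined by the additional integer constraints coming from the potent structure. Starting from $\sum_{n\le N}\log|f(n)|=dN\log N+O(N)$ and decomposing the left-hand side over primes, Chebotarev yields $\sum_{p\le N}\log p\cdot R(p,N)\sim N\log N$ where $R(p,N):=\#\{n\le N:p\mid f(n)\}$, whence
$$\sum_{p>N}\log p\cdot R(p,N)=(d-1)N\log N+o(N\log N).$$
Since $v_p(f(n))\le 1$ for every $n\le N$ whenever $p>C_f N$, the contribution of such primes to $\log L_f(N)$ is exactly $\sum_{p>C_fN}\log p\cdot\mathbf{1}[R(p,N)\ge 1]$, so cutting off below at $p=C_fN$ loses nothing.

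The central tool is a Vieta-type lemma encoding the potent partition. Let $K\subset\overline\Q$ be a number field containing all $\alpha_1,\ldots,\alpha_d$, and for a prime $\mathfrak p\subset\mathcal{O}_K$ above $p$ of residue degree one label the roots of $f\bmod p$ by $\bar\alpha_i:=\alpha_i\bmod\mathfrak p$. If $R(p,N)\ge d-r+1$, then the at most $r-1$ roots of $f\bmod p$ that fail to lift to $[1,N]$ can, by pigeonhole, be missing from no more than $r-1$ of the potent subsets $S_1,\ldots,S_r$, so at least one $S_j$ is entirely lifted. Reducing $\sum_{i\in S_j}a_{ji}\alpha_i=m_j\in\Z$ modulo $\mathfrak p$ shows that the corresponding lifts $n_i\in[1,N]$ satisfy $\sum_{i\in S_j}a_{ji}n_i\equiv m_j\pmod p$; for $p$ larger than $\bigl(\sum_i a_{ji}\bigr)N+|m_j|$ this congruence is forced to an exact integer equality, so $(n_i)_{i\in S_j}$ is confined to the fixed affine hyperplane $H_j:\sum_i a_{ji}y_i=m_j$ of dimension $|S_j|-1$ in $\Z^{|S_j|}$.

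The main estimate is then that the exceptional primes with $R(p,N)\ge d-r+1$ contribute only $o(N\log N)$ to the tail identity above. I would count pairs $(p,\text{tuple on }H_j)$ by fixing $j$ and a tuple $(n_i)_{i\in S_j}\in H_j\cap[1,N]^{|S_j|}$ and bounding the primes compatible with it: the divided difference $f[n_{i_1},\ldots,n_{i_{|S_j|}}]$ is a nonzero integer of size $O(N^{d-|S_j|+1})$ which must be divisible by $p$, and combined with the $O(N^{|S_j|-1})$ lattice-point count on $H_j$ this gives a preliminary bound $O(N^{d-r}\log N)$. The harder step, and the main obstacle, will be to sharpen this bound to $o(N\log N)$; I expect to achieve it by iterating the pigeonhole as $R(p,N)$ increases from $d-r+1$ towards $d-1$, intersecting the hyperplanes $H_j$ coming from all simultaneously fully-lifted potent subsets and invoking further coefficients of $f(x)-f_d\prod(x-n_i)q(x)\bmod p$ to force additional exact integer identities. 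Granted this refined estimate, the trivial inequality $\log L_f(N)\ge\sum_{p>C_fN:R\ge 1}\log p\ge\tfrac 1{d-r}\sum_{p>C_fN:1\le R\le d-r}R(p,N)\log p$ proves part (i). Part (ii) combines the same Vieta/potent input with the prime-band decomposition underlying Theorem \ref{thm: main0}: for each $k\le\min(\lfloor 3d/4\rfloor,d-2)$ one applies Sah-type gcd/radical bounds on $k$-tuples, but the potent pigeonhole forces the ``effective multiplicity'' of each band to be $\max(k,r)$ rather than $k$, producing exactly the denominator $V_{d,r}$ from \eqref{eq: Vdr}.
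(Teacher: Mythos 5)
Your framework (reduction over large primes, ring homomorphism to a residue ring, pigeonhole on the potent partition, reduction of the potent relation modulo $\mathfrak p$) is exactly the paper's, and you correctly reach the point where, if $R(p,N)\ge d-r+1$, some $S_j$ is fully lifted and the relation $\sum_{i\in S_j}a_{ji}\alpha_i=m_j$ becomes, for $p>DN$, an exact integer identity $\sum_{i\in S_j}a_{ji}n_i=m_j$. But here you misread the situation: you treat this as a \emph{constraint} confining $(n_i)$ to a hyperplane and then attempt a lattice-point/divided-difference count, which you acknowledge only gives $O(N^{d-r}\log N)$ and which you cannot sharpen to $o(N\log N)$. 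No counting is needed. The equality itself is \emph{impossible} for $N$ large: since $p\mid f(n_i)$, $f$ has no integer roots, and $p>DN$ is large, each $n_i$ must exceed any fixed bound (otherwise $0<|f(n_i)|<p$ contradicts $p\mid f(n_i)$), in particular $n_i>|m_j|$; and since the $a_{ji}$ are positive integers, the left side $\sum a_{ji}n_i$ is strictly larger than $|m_j|$, yet also $\le\bigl(\sum a_{ji}\bigr)N\le DN<p$, so it is a positive integer strictly between $0$ and $p$ and cannot be $\equiv m_j\ (\mathrm{mod}\ p)$ with $|m_j|$ small. This contradiction shows outright that $R(p,N)\le d-r$ for all $p>DN$ and all large $N$ — no exceptional primes to account for — and then one simply plugs $\delta=d-r$ into Sah's black-box lower-bound proposition. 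The ``main obstacle'' you describe is an artifact of not drawing this conclusion.

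Two smaller issues. First, the claim ``$v_p(f(n))\le1$ for every $n\le N$ whenever $p>C_fN$'' is false ($p^2\mid f(n)$ can occur for $p$ as large as roughly $N^{d/2}$); the correct passage from the uniform bound $R(p,N)\le\delta$ to the $L_f$ estimate uses $\sum_n v_p(f(n))\le\delta\max_n v_p(f(n))$, not an indicator function. Second, for part (ii) you gesture at ``effective multiplicity $\max(k,r)$,'' but the actual ingredient is the combination of the new bound $\delta_1\le d-r$ with Sah's bound $\delta_k\le d-k$ for $k\ge2$ (Corollary \ref{cor: sah}), together with the Hooley--Browning truncation at $k\le\min(\lfloor3d/4\rfloor,d-2)$ built into Proposition \ref{prop: main2}; none of this is supplied in your sketch.
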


\begin{cor}\label{cor: main1}
Let $f=g\circ h$ be irreducible with $g,h\in\Z[x],\,\deg g=r,\,\deg h=s,\,\deg f=d=rs,\,r,s\ge 2$. Then
$$\log L_f(N)\gtrsim \frac{d-1}{d-r}N\log N.$$

\end{cor}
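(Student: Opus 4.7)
The corollary should follow almost immediately from Theorem \ref{thm: main1}(i) once we exhibit the correct partition of the roots of $f$ into $r$ potent subsets, so the plan is to identify this partition and verify the hypotheses.

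Concretely, write $\beta_1, \ldots, \beta_r \in \overline\Q$ for the roots of $g$. Since $f = g \circ h$ is irreducible, $g$ is also irreducible (otherwise a nontrivial factorization $g = g_1 g_2$ would give $f = g_1(h)\,g_2(h)$), so the $\beta_i$ are distinct. For each $i$, let $S_i \subset \overline\Q$ be the set of roots of the polynomial $h(x) - \beta_i$. The root set $S$ of $f$ is then the disjoint union $S_1 \sqcup \cdots \sqcup S_r$: it equals $\bigcup_i S_i$ because $f(\alpha) = g(h(\alpha)) = 0$ iff $h(\alpha) = \beta_i$ for some $i$, and the union is disjoint because the $\beta_i$ are distinct. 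Each $S_i$ is non-empty since $\deg h = s \geq 2$, and by Example \ref{ex: potent} each $S_i$ is potent.

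Since $\deg g = r \geq 2$ and $\deg h = s \geq 2$, we have $d = rs \geq 4$, so the hypotheses of Theorem \ref{thm: main1} are met, and part (i) of that theorem applied to the partition $S = S_1 \sqcup \cdots \sqcup S_r$ yields
$$\log L_f(N) \gtrsim \frac{d-1}{d-r} N \log N,$$
which is the claim. There is no real obstacle here: the entire content of the corollary is the observation that decomposable irreducibles automatically supply the partition into potent subsets required by Theorem \ref{thm: main1}(i), via Example \ref{ex: potent}.
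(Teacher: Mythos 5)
Your proof is correct and follows essentially the same route as the paper: both take $\beta_1,\ldots,\beta_r$ to be the roots of $g$, partition the roots of $f$ into the fibers $S_i = h^{-1}(\beta_i)$, invoke Example \ref{ex: potent} to see each $S_i$ is potent, and apply Theorem \ref{thm: main1}(i). You add some useful detail the paper leaves implicit (distinctness of the $\beta_i$ via irreducibility of $g$, disjointness and non-emptiness of the fibers, and the check $d = rs \geq 4$), but the argument is the same.
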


\begin{proof} Let $\beta_1,\ldots,\beta_r\in\overline\Q$ be the roots of $g$. Taking $R_i=\{\alpha\in\Qb:\,h(\alpha)=\beta_i\}$ ($1\le i\le r$) we obtain the partition required in Theorem \ref{thm: main1} (by Example \ref{ex: potent} applied to $h(x)-\beta_i$ each $R_i$ is potent).
\end{proof}

The special case $h(x)=x^2$ of the above corollary (i.e. when $f(x)=f(-x)$ is even) was independently obtained by Technau \cite{Tec24_1}, with a slightly worse constant in the $\ell_f(N)$ bound. The quadratic case $\deg h=2$ gives the biggest increase in the constant compared to (\ref{eq: sah lower bound L}), namely we obtain the constant $\frac{2(d-1)}d$ instead of $1$.

In the case of a decomposable polynomial $f$ the bound in Theorem 2(ii) can be refined further.

\begin{thm}\label{thm: dec l}
Let $f=g\circ h$ be irreducible with $g,h\in\Z[x],\,\deg g=r,\,\deg h=s,\,\deg f=d=rs,\,r,s\ge 2$. Then
$$\log\ell_f(N)\gtrsim\frac{d-1}{W_{d,r}}N\log N,$$
where
\begin{equation}\label{eq: Wdr} W_{d,r}=\sum_{k=1}^{\min(\lfloor 3d/4\rfloor,d-2)}\max(d-kr,r-\lfloor kr/d\rfloor )\end{equation}
(note that $W_{d,r}<V_{d,r}$ whenever $r,s\ge 2$).
\end{thm}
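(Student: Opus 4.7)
\noindent\textbf{Proof plan for Theorem \ref{thm: dec l}.} My strategy is to refine the argument of Theorem \ref{thm: main1}(ii) by exploiting not merely the partition of the roots of $f$ into the $r$ potent fibers $R_i=h^{-1}(\beta_i)$ afforded by the decomposition, but the full two-level structure of $f=g\circ h$. The starting point is the standard identity
$$\sum_{n\le N}\log|f(n)|=\sum_p\log p\sum_{k\ge 1}N_{p^k}(f),\qquad N_{p^k}(f)=\#\{n\le N:p^k\mid f(n)\},$$
whose left-hand side equals $(d+o(1))N\log N$; one bounds $\log L_f(N)-\log\ell_f(N)$ by the tail contribution of the levels $k\ge 2$ and compares with the sharp upper bound $\log L_f(N)\lesssim(d-1)N\log N$ to extract the desired lower bound on $\log\ell_f(N)$.

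\medskip
The new input from the decomposition is that for every prime $p$ of good reduction (i.e.\ $p\nmid\disc(f)\disc(g)\disc(h)$), the divisibility $p^k\mid f(n)=g(h(n))$ forces $m=h(n)$ to be a root of $g$ modulo $p^k$, which by Hensel lifts uniquely from a root of $g\bmod p$, and each such $m$ has at most $s=\deg h$ preimages modulo $p^k$ under $h$. This produces two simultaneous estimates at each level $k$, whose larger governs the effective cost: the direct one applies the potent-partition machinery of Theorem \ref{thm: main1}(ii) to $f$ with its $r$ fibers $R_i$ (each of size $s$, and potent by Example \ref{ex: potent}) and yields a contribution of $d-kr$, while the indirect ``via-$g$'' estimate uses $\rho_f(p^k)\le s\,\rho_g(p^k)$ together with the Theorem \ref{thm: main1}(ii) analysis applied to $g$ itself at the rescaled level $\lfloor kr/d\rfloor=\lfloor k/s\rfloor$ and yields $r-\lfloor kr/d\rfloor$. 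The level rescaling is carried out via the Hensel identities $v_p(g(m))=v_p(g'(\tilde m))+v_p(m-\tilde m)$ at the $p$-adic root $\tilde m$ of $g$ closest to $m=h(n)$, combined with $v_p(h(n)-\tilde m)=v_p(h'(n_0))+v_p(n-n_0)$ at the corresponding preimage $n_0$ of $\tilde m$ under $h$, with the explicit $s$-fold ``dilation'' $kr/d=k/s$ coming from the generic equidistribution of a level-$k$ valuation of $f$ across the $s$ preimages under $h$ of a single root of $g$.

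\medskip
Combining the two bounds via $\max(d-kr,\,r-\lfloor kr/d\rfloor)$ and summing over $1\le k\le\min(\lfloor 3d/4\rfloor,d-2)$ produces $W_{d,r}$, and the same endgame as in Theorem \ref{thm: main1}(ii) gives $\log\ell_f(N)\gtrsim\frac{d-1}{W_{d,r}}N\log N$. The main technical obstacle is the rigorous level-translation in the ``via-$g$'' estimate: the integer $\lfloor kr/d\rfloor$ must be derived carefully from the $p$-adic identities above, with the finitely many ramified primes (contributing at most $O(N)$ to $\log L_f(N)$, and hence negligible against the target $N\log N$) treated uniformly, and one must verify that the two estimates can be deployed in tandem throughout the argument without double-counting so that the $\max$ in \eqref{eq: Wdr} is the genuine per-level cost. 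Beyond this, all remaining steps -- the treatment of the $k\ge 2$ tail, the discriminant bookkeeping, and the matching against the upper bound on $\log L_f(N)$ -- parallel the proof of Theorem \ref{thm: main1}(ii) essentially verbatim, with only the per-$k$ summand replaced by the refined quantity above.
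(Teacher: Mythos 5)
Your proposal correctly identifies the target per-level bound $\delta_k=\max(d-kr,\,r-\lfloor kr/d\rfloor)$, the competition between a ``direct'' and a ``via-$g$'' estimate, and the Proposition \ref{prop: main2}-style endgame (truncating the sum at $\min(\lfloor 3d/4\rfloor,d-2)$ via the Hooley--Browning input). But the mechanisms you propose for \emph{proving} the two competing bounds at level $k$ -- which is exactly where the new content of the theorem lies -- do not actually deliver them, and this is a genuine gap.

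First, you attribute the bound $d-kr$ to ``the potent-partition machinery of Theorem \ref{thm: main1}(ii) applied to $f$ with its $r$ fibers.'' That machinery only gives $\delta_k = d-\max(k,r)$, which is strictly worse than $d-kr$ as soon as $k,r\ge 2$. The stronger bound $d-kr$ is not a consequence of potency alone; it requires Sah's Lagrange-interpolation lemma (Lemma \ref{lem: sah}) applied to the \emph{inner} polynomial $h$: if more than $d-kr=r(s-k)$ of the $n\le N$ have $p^k\mid f(n)$, pigeonhole places $s-k+1$ of the corresponding residues in a single fiber $h^{-1}(\beta_i)$, which yields $s-k+1$ integers $n_j\le N$ with $h(n_j)\equiv h(n_1)\pmod{p^k}$, contradicting Lemma \ref{lem: sah} for $h$ with modulus $q=p^k$. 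You never invoke Lemma \ref{lem: sah} for $h$, and without it the direct estimate fails.

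Second, your ``via-$g$'' route via $\rho_f(p^k)\le s\,\rho_g(p^k)$ and Hensel valuation identities also does not give $r-\lfloor k/s\rfloor$. Counting roots modulo $p^k$ (local densities) is not the same as counting small representatives $n\le N$, which is what $\delta_k$ bounds; since $p^k>N$, each residue class contributes at most one $n$, but bounding the number of residues yields at best $r\le d$, not the needed $r-\lfloor k/s\rfloor$. The actual argument maps the $n_i\le N$ with $p^k\mid f(n_i)$ to the integers $m_i=h(n_i)$, observes that the $m_i$ are distinct, bounded by $(|h_s|+1)N^s$, and satisfy $p^k\mid g(m_i)$, and then applies Lemma \ref{lem: sah} to $g$ at level $k'=\lfloor k/s\rfloor\le r-1$ with the enlarged range $N'=(|h_s|+1)N^s$ (this is exactly where the exponent rescaling $k\mapsto\lfloor k/s\rfloor$ comes from: one needs $q=p^k>(D_gN')^{k'}$, and the exponent $s$ in $N'$ forces $k'\approx k/s$). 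Your ``generic equidistribution of the valuation across preimages'' is, as you yourself flag, an unproved heuristic; the paper avoids this entirely by applying the interpolation lemma to $g$ with the rescaled input size, which is clean and unconditional. As written, your proposal gestures at the right formula but does not supply a proof of the per-level bound that is the theorem's actual content.
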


In the special case $\deg h=\deg g=d^{1/2}$ the constant in Theorem \ref{thm: dec l} grows asymptotically like $\frac{32}{31}\cdot \frac 1{d^{1/2}}$ for large $d$, compared with $\frac{32}{15}\cdot \frac 1d$ in Theorem \ref{thm: main0}.

Our final main result strengthens the bound in Theorem 1 for polynomials with a small Galois group.

\begin{thm}\label{thm: main2}
Let $f\in\Z[x],\deg f=d\ge 3$ be irreducible, $S\subset\overline\Q$ its set of roots, $G_f$ its Galois group, $e=|G_f|$. Suppose that one can partition $S$ into (non-empty) potent subsets $S_1,\ldots,S_r$ where $r$ satisfies $$\dim\langle S\cup\{1\}\rangle_\Q=d+1-r,$$ here $\langle\cdot\rangle_\Q$ denotes the linear span over $\Q$ (note that since the $S_i$ are potent and disjoint, each contributes a new linear dependency on $S\cup\{1\}$ and a priori $\dim\langle S\cup\{1\}\rangle_\Q\le d+1-r$).
Then $$\log\ell_f(N)\gtrsim \frac {d-1}{U_{d,e}}N\log N,$$
where \begin{equation}\label{eq: def U_e}U_{d,e}=\sum_{k=1}^{\min(d-2,\lfloor 3d/4\rfloor)}\min \left(d-k,\bigg\lceil\frac {e}k\bigg\rceil-1\right).\end{equation}
\end{thm}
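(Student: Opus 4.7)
The plan is to extend the LP-style argument used in Theorems \ref{thm: main0} and \ref{thm: main1}(ii), inserting a Chebotarev-based input from the Galois group. As in those results, I would lower-bound $\log\ell_f(N)$ by first establishing, for each relevant $k$, an upper bound on the normalized count of primes $p$ modulo which $f$ has at least $k$ roots, and then combining these bounds in a sum that becomes the denominator of the main lower bound. The denominator $U_{d,e}$ arises from taking, for each $k\in[1,\min(d-2,\lfloor 3d/4\rfloor)]$, the better of two available upper bounds on $\pi_k(X)/\pi(X)$, where $\pi_k(X)$ counts unramified primes $p\le X$ with at least $k$ roots of $f$ mod $p$.

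The first of these is the \emph{arithmetic bound}, which contributes $d-k$ to the LP. Its derivation is the same as in Theorems \ref{thm: main0} and \ref{thm: main1}(ii) and relies on the potent partition $S=S_1\sqcup\cdots\sqcup S_r$ in combination with the dimension hypothesis $\dim\langle S\cup\{1\}\rangle_\Q=d+1-r$: the equality ensures that the only linear dependencies on $S\cup\{1\}$ come from the $r$ relations witnessing potency of the $S_i$, so no efficiency is lost in the underlying Diophantine estimates.

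The new input is the \emph{Galois bound}. By the Chebotarev density theorem, the number of roots of $f$ modulo an unramified prime $p$ equals the number of fixed points of the Frobenius conjugacy class acting on $S$, so
\begin{equation*}
\pi_k(X)\sim\frac{a_k}{e}\pi(X),\qquad a_k=\#\{\sigma\in G_f:|\mathrm{Fix}(\sigma)|\ge k\}.
\end{equation*}
Since $G_f$ acts transitively on $S$, Burnside's lemma gives $\sum_{\sigma\in G_f}|\mathrm{Fix}(\sigma)|=e$, hence $k\,a_k\le e$. When $k\nmid e$ this immediately yields $a_k\le\lfloor e/k\rfloor=\lceil e/k\rceil-1$. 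When $k\mid e$ and $k\le d-2$, equality $a_k=e/k$ would force every $\sigma$ with $|\mathrm{Fix}(\sigma)|\ge k$ to have exactly $k$ fixed points, which is impossible because the identity belongs to this set and contributes $d>k$ to the sum; hence again $a_k\le\lceil e/k\rceil-1$.

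Substituting $\min(d-k,\lceil e/k\rceil-1)$ for $d-k$ as the coefficient at step $k$ in the LP from Theorems \ref{thm: main0} and \ref{thm: main1}(ii) yields the denominator $U_{d,e}=\sum_k\min(d-k,\lceil e/k\rceil-1)$ in the resulting lower bound for $\log\ell_f(N)$. I expect the main technical obstacle to be applying the Galois bound uniformly across the relevant prime range in the LP, which will require effective Chebotarev estimates with error terms controlled uniformly in $N$, and verifying that this constraint can be inserted cleanly into the smoothed weighting used for $\ell_f(N)$ without losing efficiency when combined with the potent/dimension bound.
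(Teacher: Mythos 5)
Your proposal relies on an approach that is fundamentally different from the paper's and, as described, cannot be made to work. The paper's Proposition~\ref{prop: main2} requires, for a \emph{fixed} large prime $p>DN$, an upper bound $\delta_k$ on the number of $n\in[1,N]$ with $p^k\mid f(n)$. Your Chebotarev/Burnside argument instead bounds the density (among all unramified primes) of primes $p$ for which $f$ has at least $k$ roots in $\Z/p\Z$. These are genuinely different quantities. For $p>N$ the number of $n\in[1,N]$ with $p^k\mid f(n)$ is bounded by the number of roots of $f\bmod p^k$ that happen to have a representative in the short interval $[1,N]$, and Chebotarev gives no control over this: a prime $p$ could split completely (all $d$ roots mod $p$), and Chebotarev is silent on how many of those $d$ residues lie in $[1,N]\subset[1,p)$. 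Moreover, the primes $p>DN$ that divide $Q(N)=\prod_{n\le N} f(n)$ form a density-zero subset of the primes up to $N^d$, so a density statement over all primes in that range is the wrong instrument to extract a per-prime bound usable inside Proposition~\ref{prop: main2}.

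The coincidence that both arguments produce the quantity $\lceil e/k\rceil-1$ is purely numerological. The paper's proof of this bound (Lemma~\ref{lem: pre lem main2}) never looks at Frobenius orbits: it lifts the residues $n_1,\dots,n_t$ (roots of $f\bmod p^k$ in $[1,N]$) to distinct algebraic roots $\alpha_1,\dots,\alpha_t\in S$ via the ring homomorphism from Lemma~\ref{lem: hom}, then uses the pigeonhole/Siegel-type Lemma~\ref{lem: int comb} to build a nonzero algebraic integer $\gamma=f_d\sum a_i(\alpha_i-n_i)$ with $p^k\mid N_{K/\Q}(\gamma)$ and $|N_{K/\Q}(\gamma)|\le (DN)^{|G_f|/t}$, which forces $t<|G_f|/k$. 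The hypothesis on $\dim\langle S\cup\{1\}\rangle_\Q$ is needed precisely to handle the degenerate case $\gamma=0$, where it is used to find some $S_i$ entirely among the $\alpha_j$ and invoke potency as in Lemma~\ref{lem: main1}; your proposal treats that hypothesis as merely ensuring ``no efficiency is lost in Diophantine estimates,'' which does not match its actual role. In short, the norm/Siegel-lemma mechanism is the missing idea in your proposal, and it is not a technicality about effective Chebotarev: the Chebotarev route does not bound the right quantity.
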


\begin{rem} We have $U_{d,e}<V_d$ iff $\lceil e/k\rceil-1<d-k$ for some $1\le k\le \min(d-2,\lfloor 3d/4\rfloor)$. In this case the constant in Theorem \ref{thm: main2} is bigger than in (\ref{eq: main0}). In particular this happens whenever $e=d\ge 4$. However there is no improvement if $e>d^2$, so the order of the Galois group of $f$ needs to be at most quadratic in $d$ to obtain an improvement over Theorem \ref{thm: main0}.\end{rem}

\begin{cor}\label{cor: main2} Let $f\in\Z[x],\,\deg f=d\ge 3$ be an irreducible polynomial with roots $\alpha_1,\ldots,\alpha_d\in\Qb$ such that
\begin{enumerate}
    \item [(a)] $\Q(\alpha_1)=\Q(\alpha_1,\ldots,\alpha_d)$.
    \item [(b)] $\dim\langle 1,\alpha_1,\ldots,\alpha_d\rangle_{\Q}=d$.
\end{enumerate}
Then $$\log \ell_f(N)\gtrsim \frac {d-1}{U_d}N\log N,$$ where $$U_d=U_{d,d}=\sum_{k=1}^{\min(d-2,\lfloor 3d/4\rfloor)}(\lceil d/k\rceil-1).$$
\end{cor}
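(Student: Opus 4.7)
The plan is to specialize Theorem \ref{thm: main2} to the trivial partition $r = 1$, $S_1 = S = \{\alpha_1, \ldots, \alpha_d\}$, and then simplify the resulting constant $U_{d,d}$.

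First I would pin down the parameters appearing in Theorem \ref{thm: main2}. Since $f$ is irreducible of degree $d$, $[\Q(\alpha_1):\Q] = d$, and hypothesis (a) identifies this field with the splitting field of $f$, giving $e = |G_f| = d$. For the partition, Example \ref{ex: potent} applied with $h = f$ and $a = 0$ shows that $S$ itself is potent: writing $f = f_d x^d + \cdots + f_0$ and using that $f$ is separable (so every root has multiplicity one), Vieta gives $\sum_{i=1}^d |f_d|\alpha_i = \pm f_{d-1} \in \Z$. The one-block partition $\{S\}$ thus consists of potent subsets, and hypothesis (b) reads $\dim\langle S \cup \{1\}\rangle_\Q = d = d + 1 - r$, which is exactly the dimension condition required.

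Applying Theorem \ref{thm: main2} with these parameters yields
$$\log \ell_f(N) \gtrsim \frac{d-1}{U_{d,d}}\, N \log N, \qquad U_{d,d} = \sum_{k=1}^{\min(d-2,\,\lfloor 3d/4 \rfloor)} \min\bigl(d - k,\ \lceil d/k \rceil - 1\bigr).$$
It then remains to match the stated constant $U_d$ by checking that $\min(d-k, \lceil d/k\rceil - 1) = \lceil d/k\rceil - 1$ throughout the range of summation, i.e.\ that $\lceil d/k\rceil + k \leq d + 1$ for $1 \leq k \leq \min(d-2, \lfloor 3d/4 \rfloor)$. Writing $d = qk + s$ with $0 \leq s < k$ and $q \geq 1$ (the latter because $k < d$), this collapses to $(q-1)(k-1) \geq \max(0, 1-s)$, which is immediate. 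There is no substantial obstacle in the argument: the content lies entirely in recognizing that (a) forces $e$ to take its smallest admissible value $d$, and (b) encodes the strongest form of the dimension hypothesis ($r = 1$), so Theorem \ref{thm: main2} delivers its strongest conclusion precisely for such $f$.
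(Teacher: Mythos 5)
Your proposal takes exactly the same route as the paper's proof: apply Theorem~\ref{thm: main2} with the trivial one-block partition $S_1 = S$, noting that (a) forces $e = |G_f| = d$, that $S$ is potent by Example~\ref{ex: potent} applied to $f(x) = h(x) - a$ with $a=0$, and that (b) is precisely the dimension condition with $r=1$. You go slightly further than the paper in that you also verify the simplification $U_{d,d} = \sum_k (\lceil d/k\rceil -1)$, which the paper states without comment. That verification is correct in spirit but has a small algebraic slip: from $\lceil d/k\rceil + k \le d+1$ with $d = qk+s$, $0\le s<k$, the reduced condition is $(q-1)(k-1)\ge 0$ when $s=0$ and $(q-1)(k-1)\ge 1-s$ when $s\ge 1$; your unified form $(q-1)(k-1)\ge \max(0,1-s)$ is too strong at $s=0$ (and in fact fails at $k=1$, where it reads $0\ge 1$). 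The intended inequality $\lceil d/k\rceil -1 \le d-k$ still holds at $k=1$ (both sides equal $d-1$), so the conclusion is unaffected; just handle $k=1$ and $s=0$ separately rather than through that one formula.
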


\begin{proof} Condition $(a)$ is equivalent to $e=|G_f|=d$, the set $\{\alpha_1,\ldots,\alpha_n\}$ is potent by Example \ref{ex: potent}. We may apply Theorem \ref{thm: main2} with the partition $S=S_1$ and obtain the assertion.
\end{proof}

\begin{rem} Condition (b) in Corollary \ref{cor: main2} typically holds, it says that there are no linear relations between the roots of $f$ and 1 other than the one imposed by Vieta's formula. Condition (a) is much more special and is equivalent to $f$ being the minimal polynomial (normalized to have coprime integer coefficients) of a primitive element in a Galois extension of $\Q$. To produce examples of polynomials $f$ satisfying (a) and (b) one can take an arbitrary Galois extension $E/\Q$, pick a ``random" element $\alpha\in E$ and typically its minimal polynomial $f$ will satisfy (a) and (b).
\end{rem}

\begin{rem} The first few values of $U_d$ are $U_3=2, U_4=4, U_5=7, U_6=9, U_7=13, U_8=15, U_9=18, U_{10}=21$. For large $d$ we have $U_d\sim d\log d$, so the constant in Corollary \ref{cor: main2} grows like $1/\log d$ as $d\to\infty$, compared with $32/15d$ in Theorem \ref{thm: main0}. The actual predicted constant is $d-1$.
\end{rem}

\begin{cor}\label{cor: cyclotomic}
    Let $\Phi_m(x)$ be the $m$-th cyclotomic polynomial, with $m=p^kn$, $p$ prime, $n$ squarefree. Denote $d=\deg\Phi_m=\varphi(m)$ and assume $d\ge 3$. Then
    $$\log\ell_{\Phi_m}(N)\gtrsim\frac {d-1}{U_d}N\log N.$$
\end{cor}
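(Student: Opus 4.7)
The plan is to apply Theorem~\ref{thm: main2} with $f=\Phi_m$. Since $\Phi_m$ is Galois over $\Q$ of degree $\varphi(m)=d$, we have $e=|G_{\Phi_m}|=d$, and the theorem's bound $\frac{d-1}{U_{d,e}}$ is exactly $\frac{d-1}{U_d}$. What remains is to exhibit a partition of the root set $S=\{\zeta_m^a:a\in(\Z/m\Z)^*\}$ into potent subsets $S_1,\ldots,S_r$ satisfying $\dim\langle S\cup\{1\}\rangle_\Q=d+1-r$. We may assume $\gcd(p,n)=1$ (if $p\mid n$, replace $(k,n)$ by $(k+1,n/p)$) and argue in two cases.

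If $m$ is squarefree (equivalently $k\le 1$), I would apply Corollary~\ref{cor: main2} with the trivial partition $r=1$. Condition (a) is automatic since $\Q(\zeta_m)/\Q$ is Galois. Condition (b) follows from the classical fact that for squarefree $m$ the element $\zeta_m$ generates a normal $\Q$-basis of $\Q(\zeta_m)$, making $S$ linearly independent over $\Q$; combined with Vieta's relation $\sum_{\alpha\in S}\alpha=\mu(m)=\pm 1$, this forces $\dim\langle S\cup\{1\}\rangle_\Q=d$.

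If $k\ge 2$, I would partition $S$ by the $r=d/p$ cosets of the subgroup $H=\{1+jm/p:0\le j<p\}\subset(\Z/m\Z)^*$ (the kernel of reduction modulo $m/p$). For each coset $cH$, the identity $\sum_{a\in cH}\zeta_m^a=\zeta_m^c\sum_{j=0}^{p-1}\zeta_p^{cj}=0\in\Z$ shows potency. To verify the dimension equality, I would identify $\langle S\rangle_\Q$ with the kernel $T$ of the relative trace $\operatorname{tr}_{\Q(\zeta_m)/\Q(\zeta_{m/p})}$: the same computation shows $S\subset T$, and since $\dim_\Q T=d-d/p$ (trace being surjective) while $\dim\langle S\rangle_\Q\ge d-r=d-d/p$ by the $r$ linearly independent coset relations, equality follows. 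Since $\operatorname{tr}(1)=p\ne 0$, we have $1\notin T$, giving $\dim\langle S\cup\{1\}\rangle_\Q=d+1-r$. The main technical point is this dimension equality---verifying that the coset-sum relations exhaust all $\Q$-linear relations among $S\cup\{1\}$---and the trace argument handles it uniformly in $p$, $k$, and $n$.
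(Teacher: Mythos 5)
Your overall plan matches the paper's: apply Corollary~\ref{cor: main2} with the trivial partition in the squarefree case, and apply Theorem~\ref{thm: main2} with the partition by cosets of the reduction kernel $H$ in the case $k\ge 2$ (this is the same partition as the paper's $S_i$, written multiplicatively rather than additively). The potency computation via $\sum_j\zeta_p^{cj}=0$ is also the paper's.

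However, there is a genuine gap in your verification of the dimension hypothesis in the $k\ge 2$ case. You correctly establish $\langle S\rangle_\Q\subseteq T$, $\dim_\Q T=d-d/p$, and $1\notin T$. But you then claim $\dim\langle S\rangle_\Q\ge d-r$ ``by the $r$ linearly independent coset relations.'' This reasoning goes the wrong way: exhibiting $r$ linearly independent relations among the $d$ elements of $S$ yields the \emph{upper} bound $\dim\langle S\rangle_\Q\le d-r$, not a lower bound. The containment $\langle S\rangle_\Q\subseteq T$ likewise only gives $\dim\langle S\rangle_\Q\le d-d/p$. So your argument in fact establishes $\dim\langle S\cup\{1\}\rangle_\Q\le d+1-r$, which (as the paper remarks after stating Theorem~\ref{thm: main2}) holds a priori for any partition into potent sets. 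What must actually be proved is the \emph{reverse} inequality, i.e.\ that the coset relations exhaust the $\Q$-linear relations on $S$. The paper does this by citing Johnson's computation of $\dim\langle S\rangle_\Q$ for roots of unity. Your trace argument could be upgraded to close the gap --- e.g.\ by checking directly that among the $\zeta_m^a$ one can pick $d-d/p$ that lie in a monomial $\Q$-basis of $\Q(\zeta_m)$, or by tensoring the prime-power and squarefree parts --- but as written it does not supply the lower bound, and the ``uniform in $p$, $k$, $n$'' claim is not justified.
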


\begin{proof} Denote by $S$ the set of roots of $\Phi_m$, i.e. the primitive $m$-th roots of unity and let $\zeta\in S$ be one such root. Note that $\Q(\zeta)=\Q(S)$ and $|\Gal(\Phi_m/\Q)|=d=\varphi(m)$. 

If $m$ is itself squarefree then by \cite[p. 57]{Joh85} $S$ is a basis of $\Q(\zeta)$ and therefore $\dim\langle S\cup\{1\}\rangle_\Q=d$ (since $S$ is potent $S\cup\{1\}$ is linearly dependent. In fact $\sum_{\alpha\in S}\alpha=\mu(m)$, where $\mu$ is the M\"obius function). Hence Corollary \ref{cor: main2} applies, giving the assertion.

Now assume that $m$ is not squarefree, so we may assume $m=p^kn$, $k\ge 2$, $(p,n)=1$.
Denote 
$$\{a_1,\ldots,a_{d/p}\}=\{1\le a<m/p: (a,m/p)=1\}$$ and set
$$S_i=\{\zeta^{a_i},\zeta^{a_i+m/p},\zeta^{a_i+2m/p}+\ldots\zeta^{a_i+(p-1)m/p}\},\quad 1\le i\le d/p.$$
Then $S_1,\ldots,S_r$ with $r=d/p$ is a partition of $S$ and each $S_i$ is potent since $\sum_{\alpha\in S_i}\alpha=0$. Finally by \cite[p. 57]{Joh85} we have $\dim\langle S\rangle_\Q=(p-1)d/p$ and $1\not\in\langle S\rangle_\Q$ because $\mathrm{tr}_{\Q(\zeta)/\Q}(\alpha)=\sum_{\beta\in S}\beta=\sum_{i=1}^{d/p}\sum_{\beta\in S_i}\beta=0$ for any $\alpha\in S$, while $\mathrm{tr}_{\Q(\zeta)/\Q}(1)\neq 0$. Hence $\dim\langle S\cup\{1\}\rangle_\Q=(p-1)d/p+1=d-r+1$, so Theorem \ref{thm: main2} applies with $e=d$ and we obtain the assertion of the corollary.
\end{proof}

\begin{rem} In the special case $m=2^k$ the sharper bound $\log \ell_{\Phi_m}(N)\gtrsim \frac{2^{k-1}-1}{(k-1)2^{k-2}}N\log N$ was obtained in \cite{Tec24_1} by a different method.\end{rem}
\begin{rem} In the setup of Theorem \ref{thm: main2}, if additionally the set of roots of $f$ admits a partition into $u>1$ potent subsets, then one could combine the proofs of theorems \ref{thm: main1}, \ref{thm: main2} to obtain a slightly sharper constant with $U_{d,e}$ replaced by
$$\sum_{k=1}^{\min(d-2,\lfloor 3d/4\rfloor)}\min \left(d-\max(k,u),\bigg\lceil\frac {e}k\bigg\rceil-1\right).$$
This applies in particular to the cyclotomic polynomial $\Phi_m$ where $m$ is composite and satisfies the condition of Corollary \ref{cor: cyclotomic}, but the improvement is modest.
\end{rem}
The rest of the paper is dedicated to the proof of theorems \ref{thm: main0}, \ref{thm: main1}, \ref{thm: dec l}, \ref{thm: main2}.
\\ \\
{\bf Acknowledgment.} The author would like to thank Ze\'ev Rudnick and the anonymous referee of this manuscript for helpful comments on the exposition. The author was partially supported by Israel Science Foundation grant no. 2507/19.

\section{General strategy and proof of Theorem \ref{thm: main0}}

The usual approach to estimating $L_f(N)$ and $\ell_f(N)$ is to consider their prime factorization and separately estimate the contribution of small primes $p=O(N)$ and large primes (not falling in this range). The contribution of small primes can be estimated precisely using the Chebotarev density theorem (in fact even weaker results on the distribution of prime ideals suffice), but understanding the contribution of large primes is a significant challenge. The approach to lower bounds taken in \cite{MaRu21, Sah20} as well as here is to bound the number of values $f(1),\ldots,f(N)$ that a given large prime can divide, with or without counting multiplicity. Our new results come from giving improved bounds of this type for polynomials of special form. An additional ingredient we use for the lower bounds on $\ell_f(N)$ are results of Hooley \cite{Hoo67} and Browning \cite{Bro11} from their work on power-free values of polynomials.

\begin{prop}\label{prop: main1} Let $f\in\Z[x],\deg f=d\ge 2$ be an irreducible polynomial. Suppose there exist constants $D=D(f)>0, \delta=\delta(f)>0$ such that for any sufficiently large $N\ge N_0(f)$ and any prime $p>DN$, $p$ divides at most $\delta$ of the values $f(1),f(2),\ldots,f(N)$. Then
$$L_f(N)\gtrsim \frac{d-1}{\delta}N\log N.$$
\end{prop}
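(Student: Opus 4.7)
The plan is to exploit the identity
$$\sum_{n=1}^N\log|f(n)|=\sum_p\log p\cdot\sum_{n=1}^N v_p(f(n)),$$
together with the elementary asymptotic $\sum_{n=1}^N\log|f(n)|=dN\log N+O(N)$, by splitting the right-hand side at the cutoff $p=DN$. Denote the resulting partial sums by $\Sigma_{\mathrm{small}}$ (the range $p\le DN$) and $\Sigma_{\mathrm{large}}$ (the range $p>DN$).

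The large-prime side is where the hypothesis of the proposition enters. For every $p>DN$ at most $\delta$ of the values $f(1),\ldots,f(N)$ are divisible by $p$, so at most $\delta$ terms in the inner sum $\sum_{n=1}^N v_p(f(n))$ are nonzero, and each is bounded by $v_p(L_f(N))=\max_n v_p(f(n))$. Hence $\sum_{n=1}^N v_p(f(n))\le\delta\,v_p(L_f(N))$, and summing yields
$$\Sigma_{\mathrm{large}}\le\delta\sum_{p>DN}v_p(L_f(N))\log p\le\delta\log L_f(N).$$

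For the small-prime side I would show the upper bound $\Sigma_{\mathrm{small}}\le N\log N+O(N)$. Writing $\sum_{n\le N}v_p(f(n))=\sum_{k\ge 1}|\{n\le N:p^k\mid f(n)\}|$ and bounding each inner cardinality by $N\rho_f(p^k)/p^k+\rho_f(p^k)$, where $\rho_f(p^k)$ is the number of roots of $f$ modulo $p^k$, the dominant contribution to $\Sigma_{\mathrm{small}}$ is $N\sum_{p\le DN}\rho_f(p)\log p/p$. The Landau prime ideal theorem applied to the splitting field of $f$, equivalently the Chebotarev density theorem combined with Burnside's orbit-counting lemma (which gives average one fixed point for the transitive action of $G_f$ on the roots), yields $\sum_{p\le x}\rho_f(p)\log p/p=\log x+O(1)$, so this main term equals $N\log N+O(N)$; higher prime-power contributions and those from primes dividing $\mathrm{lc}(f)\disc(f)$ are $O(N)$ by standard estimates. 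Combining everything,
$$dN\log N+O(N)=\Sigma_{\mathrm{small}}+\Sigma_{\mathrm{large}}\le N\log N+O(N)+\delta\log L_f(N),$$
which rearranges to $\log L_f(N)\ge\frac{d-1}{\delta}N\log N+O(N)$, as desired.

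The main technical obstacle is the small-prime upper bound, since it requires the Chebotarev/Landau estimate to be tight to leading order. This is a standard ingredient of the subject, already implicit in \cite{Cil11,MaRu21,Sah20}, and beyond it the proof is pure bookkeeping: the sole new feature is that the abstract constant $\delta$ plays the role of the trivial bound $d$ on the number of roots of $f$ mod $p$ lying in $[1,N]$ for $p>DN$, so that sharper bounds on $\delta$ for special classes of $f$ translate directly into sharper lower bounds on $\log L_f(N)$.
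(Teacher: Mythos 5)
Your proof is correct and takes essentially the same approach as the paper's, which simply defers to Sah's proof of Theorem 1.3 in \cite{Sah20} with the observation that it works verbatim once the specific constants $D=d|f_d|+1,\,\delta=d-1$ are replaced by arbitrary $D,\delta>0$. Your decomposition $\sum_n\log|f(n)|=\Sigma_{\mathrm{small}}+\Sigma_{\mathrm{large}}$, the Chebotarev/Landau estimate $\Sigma_{\mathrm{small}}\le N\log N+O(N)$, and the bound $\Sigma_{\mathrm{large}}\le\delta\log L_f(N)$ via the hypothesis are exactly the ingredients of Sah's argument.
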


\begin{proof} 
This is proved in \cite[Proof of Theorem 1.3]{Sah20} for the special case $ D=d|f_d|+1,\,\delta=d-1$, but the proof works essentially verbatim for any $D,\delta>0$.\end{proof}

For the analogous proposition for $\ell_f(N)$ we need to recall the results of Hooley and Browning which were key steps in their work on power-free values of polynomials.

\begin{prop}\label{prop: k-free} Let $f\in\Z[x],\deg d\ge 3$ be an irreducible polynomial.
 The number of values among $f(1),\ldots,f(N)$ which are divisible by $p^{\min(d-1,\lfloor 3d/4\rfloor+1)}$ for some prime $p>N$ is $o(N)$.\end{prop}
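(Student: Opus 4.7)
Set $k = \min(d-1, \lfloor 3d/4 \rfloor + 1)$ and let $E_k(N)$ denote the quantity to be bounded. The plan is to realize $E_k(N)$ as counting a set of pairs $(n,p)$ and then invoke the estimates of Hooley \cite{Hoo67} and Browning \cite{Bro11} that underlie their work on power-free polynomial values.

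For the preliminary reduction, $|f(n)| = O(N^d)$ for $n \in [1,N]$, so any prime $p > N$ with $p^k \mid f(n)$ satisfies $p = O(N^{d/k})$. Moreover, since $p^k > N \geq n$, each residue class modulo $p^k$ contains at most one element of $[1,N]$; hence $\#\{n \leq N : p^k \mid f(n)\} \leq \rho_k(p) \leq d$ for $p \nmid \disc(f)$ (by Hensel's lemma), where $\rho_k(p)$ is the number of roots of $f$ modulo $p^k$. The naive bound $E_k(N) \leq \sum_{N < p = O(N^{d/k})} \rho_k(p) \ll N^{d/k}/\log N$ exceeds $N$ in the regime $k \leq d-1$, so a more refined argument is needed.

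The refined argument reparametrizes via $m := f(n)/p^k$: for each fixed $m = O(N^{d-k})$, one counts integer solutions $(x,y) = (n,p)$ to the superelliptic Diophantine equation $f(x) = m y^k$ with $x \leq N$ and $y > N$. This is precisely the counting problem handled in \cite{Hoo67} by lattice-point geometry combined with $p$-adic approximation of roots of $f$ (yielding the threshold $k \geq d-1$), and in \cite{Bro11} via the determinant method of Bombieri--Pila--Heath-Brown (yielding the improved threshold $k \geq \lfloor 3d/4 \rfloor + 1$ for large $d$).

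The main technical obstacle is the uniformity in $m$ of the lattice-point estimate for the family of curves $f(x) = m y^k$, which is the heart of both \cite{Hoo67} and \cite{Bro11}. I would invoke these results as a black box: for $k$ equal to the minimum of the two thresholds the resulting count is $o(N)$, which is precisely the statement of the proposition. Beyond verifying that $f$ (irreducible, $\deg f \geq 3$) satisfies the hypotheses of the cited results, no further work would be required.
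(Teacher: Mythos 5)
Your proof takes essentially the same approach as the paper: both deduce the bound by invoking the estimates of Hooley \cite{Hoo67} and Browning \cite{Bro11} for the count of $n\le N$ with $p^k\mid f(n)$ for large primes $p$, as black boxes. The additional exposition you give of the reduction to counting lattice points on the family of superelliptic curves $f(x)=my^k$ correctly identifies the mechanism behind the cited bounds, which the paper's proof (pointing directly to Hooley's Equation (15) and Browning's bound on $E(\xi)$) leaves implicit.
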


\begin{proof} It follows from \cite[Equation (15)]{Hoo67} that the number of values among $f(1),\ldots,f(N)$ divisible by $p^{d-1}$ for a prime $p>N$ is $o(N)$. The same statement with $p^{\lfloor 3d/4\rfloor+1}$  in place of $p^{d-1}$ follows from the bound established in \cite[\S 3]{Bro11} on the quantity $E(\xi)$ defined in \cite[Equation (9)]{Bro11}.
\end{proof}

\begin{prop} Let $f\in\Z[x],\,\deg f=d\ge 3$ be an irreducible polynomial. \label{prop: main2} Suppose there exist constants $D=D(f)>0, \delta_k=\delta_k(f)\ge 0\,(1\le k\le d)$ such that for any sufficiently large $N\ge N_0(f)$ and any prime $p>DN$, $p^k$ divides at most $\delta_k$ of the values $f(1),\ldots,f(N)$. Then
$$\ell_f(N)\gtrsim \frac{d-1}{V}N\log N,$$
where \begin{equation}\label{eq: V}V=\sum_{k=1}^{\min(d-2,\lfloor 3d/4\rfloor)}\delta_k.\end{equation}
\end{prop}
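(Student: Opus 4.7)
The plan follows the architecture of Sah's proof of Proposition \ref{prop: main1}, but bookkeeps the contribution of each prime power $p^k$ separately. Set
\[A_p(k)=\#\{1\le n\le N\colon p^k\mid f(n)\}\]
and $K=\min(d-1,\lfloor 3d/4\rfloor+1)$, so the hypothesis gives $A_p(k)\le\delta_k$ for $p>DN$ and $1\le k\le K-1$, while Proposition \ref{prop: k-free} gives $\sum_{p>N}A_p(K)=o(N)$. The starting point is the identity
\[\sum_{n=1}^N\log|f(n)|=\sum_p\log p\sum_{k\ge 1}A_p(k)=dN\log N+O(N),\]
which I would split into three regions: small primes $p\le DN$; large primes with exponent $k<K$; and large primes with exponent $k\ge K$.

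For small primes, Hensel's lemma gives $A_p(k)=\rho_f(p)(N/p^k+O(1))$ for $p\nmid\disc(f)$ (the finitely many bad primes contribute $O(N)$ total). Summing over $k$ and $p\le DN$ and invoking Chebotarev's density theorem---which via partial summation yields $\sum_{p\le X}\rho_f(p)\log p/(p-1)=\log X+O(1)$ because the average number of roots of an irreducible $f$ modulo primes equals $1$---one gets
\[\sum_{p\le DN}\log p\sum_{k\ge 1}A_p(k)=N\log N+O(N).\]
For large primes with small exponent, the hypothesis combined with $A_p(k)\ge 1\Rightarrow p\mid\ell_f(N)$ gives
\[\sum_{p>DN}\log p\sum_{k=1}^{K-1}A_p(k)\le\sum_{k=1}^{K-1}\delta_k\log\ell_f(N)=V\log\ell_f(N).\]
For large primes with large exponent, using $\sum_{k\ge K}\mathbf{1}[p^k\mid f(n)]\le v_p(f(n))$ and $v_p(f(n))\log p\le\log|f(n)|\le d\log N+O(1)$ yields
\[\sum_{p>DN}\log p\sum_{k\ge K}A_p(k)\le(d\log N+O(1))\sum_{p>N}A_p(K)=o(N\log N)\]
by Proposition \ref{prop: k-free}.

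Subtracting the small-prime equality and the large-exponent bound from the identity $dN\log N+O(N)$ leaves $V\log\ell_f(N)\ge(d-1)N\log N-o(N\log N)$, which yields the claim. The main obstacle is the \emph{placement} of the cutoff $K$: it must be large enough that Proposition \ref{prop: k-free} makes the large-exponent region negligible, yet small enough that $V$---and hence the denominator of the final constant---is not inflated by extra $\delta_k$'s. This is exactly why $K$ is tied to the Hooley--Browning threshold $\min(d-1,\lfloor 3d/4\rfloor+1)$ defining $V$.
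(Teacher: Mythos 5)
Your proposal is correct and follows essentially the same architecture as the paper's proof: split the total $\sum_n\log|f(n)|\sim dN\log N$ into small primes $p\le DN$, large primes with exponent $k<K$ where $K=\min(d-1,\lfloor 3d/4\rfloor+1)$, and large primes with exponent $k\ge K$; evaluate the first via the Chebotarev/Hensel computation, bound the second by $V\log\ell_f(N)$ via the hypothesis, and kill the third by the Hooley--Browning input (Proposition~\ref{prop: k-free}). The only cosmetic difference is that you re-derive the small-prime asymptotic $N\log N+O(N)$ from scratch, whereas the paper cites Sah's Propositions~2.1 and~3.1 (the split $Q_1=Q_SQ_{LI}$) together with $\log Q(N)\sim dN\log N$ to reach the same conclusion; both amount to the same Chebotarev--Mertens estimate. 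One small point worth tightening: Proposition~\ref{prop: k-free} bounds $\#\{n\le N:\exists p>N,\,p^K\mid f(n)\}$, not $\sum_{p>N}A_p(K)$ directly, so to get $\sum_{p>N}A_p(K)=o(N)$ you should note that each $n$ is counted at most $\lfloor d/K\rfloor$ times (since at most that many primes $p>N$ can satisfy $p^K\mid f(n)$ when $|f(n)|=O(N^d)$); this only costs a bounded factor and the conclusion stands.
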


\begin{proof}
 The same assertion but with the constant $\frac{d-1}{\sum_{k=1}^d\delta_k}$ is essentially established in \cite[Proof of Theorem 1.4]{Sah20}. The main new ingredient introduced here is Proposition \ref{prop: k-free}.
We denote by $\sum_p,\prod_p$ sums and products over primes respectively. By $v_p(n)$ we denote the multiplicity of $p$ in the factorization of $n$. Assume WLOG that $D\ge 1$ and denote $$Q(N)=\prod_{n=1}^Nf(n),\quad Q_1(N)=\prod_{p\atop{p\le DN}}p^{v_p(f(n))},\quad Q_L(N)=\prod_{p\atop{p> DN}}p^{v_p(f(n))},\quad \ell_L(N)=\prod_{p|Q(N)\atop{p> DN}}p.$$
We have $Q(N)=Q_1(N)Q_L(N)$ and 
$\ell_f(N)\ge\ell_L(N)$. In \cite{Sah20} 
the quantity $Q_1(N)$ is further 
decomposed as $Q_1(N)=Q_S(N)Q_{LI}(N)$ 
(products over small and linear-sized 
primes) and it is shown that $\log Q_S(N)\sim 
N\log N$ \cite[Proposition 2.1]{Sah20} and 
$\log Q_{LI}(N)=O(N)$ \cite[Proposition 
3.1]{Sah20}. Since $\log Q(N)\sim dN\log 
N$ we obtain
$$\log Q_L(N)=\log\frac{Q(N)}{Q_S(N)Q_{LI}(N)}\sim (d-1)N\log N.$$

Now fix $0<\epsilon<1$. By Proposition \ref{prop: k-free}, for $N$ sufficiently large at most $\epsilon N$ of the values $f(1),\ldots,f(N)$ are divisible by the $m$-th power of any prime $p>DN$, where $m=\min(d-1,\lfloor 3d/4\rfloor+1)$, while at most $\delta_k$ are divisible by the $k$-th power of such a prime for each $1\le k\le d$.
Thus $v_p(f(n))\le V$ (where $V$ is given by (\ref{eq: V})) for all but at most $\epsilon N$ of $1\le n\le N$ and $v_p(f(n))\le\delta=\sum_{k=1}^d\delta_k$ for all of them (taking $D$ large enough we may assume WLOG that $p^{d+1}>|f(N)|$ for any $p>DN$, and then $p^{d+1}\nmid f(n)$ for any $1\le n\le N$). Hence (using that $\log p<(d+1)\log N$ if $p|Q(N)$ and $N$ is sufficiently large)
\begin{multline*}(d-1)N\log N\sim\log Q_L(N)=\sum_{p|Q(N)\atop{p>DN}}v_p(f(n))\log p\le \epsilon N\delta(d+1)\log N +\sum_{p|Q(N)\atop{p>DN}}V\log p\\=
\epsilon\delta (d+1)N\log N+V\log\ell_L(N).\end{multline*}
Taking $\epsilon\to 0$ shows that
$$\log \ell_f(N)\ge\log \ell_L(N)\gtrsim\frac{d-1}{V}N\log N,$$ proving the assertion.
\end{proof}

The next lemma is a generalization of \cite[Lemma 4.1]{Sah20}.

\begin{lem}\label{lem: sah} Let $f=\sum_{i=0}^df_ix^i\in\Z[x], \deg f=d\ge 2$ be a fixed polynomial. Denote $D=d|f_d|+1$. There exists a constant $M=M(f)\in\N$, such that for any $N\ge M$ and integers $1\le k\le d-1$, $a\in\Z$, $q>(DN)^k$, there do not exist integers $M\le n_1<n_2<\ldots<n_{d-k+1}\le N$ such that:
\begin{enumerate}
    \item[(a)] $f(n_i)\equiv a\pmod q$ for all $1\le i\le d-k+1$,
    \item[(b)] $(q, n_i-n_j)=1$ for all $1\le i<j\le d-k+1$.
\end{enumerate}
\end{lem}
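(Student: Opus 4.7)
The plan is to extend Sah's argument (the case $k=1$) to general $k$ by passing to the $(d-k)$-th divided difference
$$\Lambda:=f[n_1,\ldots,n_{d-k+1}]=\sum_{j=1}^{d-k+1}\frac{f(n_j)}{P_j},\quad P_j=\prod_{i\neq j}(n_j-n_i).$$
Using the standard identity $x^n[x_1,\ldots,x_{m+1}]=h_{n-m}(x_1,\ldots,x_{m+1})$, where $h_r$ denotes the complete homogeneous symmetric polynomial of degree $r$, $\Lambda$ admits the integer-coefficient expansion $\Lambda=\sum_{i=0}^{k} f_{d-k+i}h_i(n_1,\ldots,n_{d-k+1})$; in particular $\Lambda\in\Z$. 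My strategy has three steps: (i) show $q\mid\Lambda$; (ii) bound $|\Lambda|<q$ to force $\Lambda=0$; (iii) rule out $\Lambda=0$ via the mean value theorem for divided differences.

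For (i), the key observation is that $\sum_j 1/P_j$ is the $(d-k)$-th divided difference of the constant function $1$, which vanishes since $d-k\ge 1$. Writing $f(n_j)=a+qe_j$ with $e_j\in\Z$ therefore yields $\Lambda=q\sum_j e_j/P_j$. Multiplying by the squared Vandermonde $\pi^2=\prod_{i<j}(n_j-n_i)^2$ produces an integer, since $\pi^2/P_j=\pm\prod_{j'\neq j}P_{j'}\in\Z$. The pairwise-coprimality hypothesis gives $(\pi,q)=1$, so $q\mid\pi^2\Lambda$ implies $q\mid\Lambda$. For (ii), using $h_i(n_1,\ldots,n_{d-k+1})\le\binom{d-k+i}{i}N^i$, the expansion above gives $|\Lambda|\le|f_d|\binom{d}{k}N^k+O(N^{k-1})$ as $N\to\infty$. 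An elementary check confirms $|f_d|\binom{d}{k}<(d|f_d|+1)^k=D^k$ for all $|f_d|\ge 1$ and $1\le k\le d-1$, so $|\Lambda|<(DN)^k<q$ for $N$ sufficiently large, forcing $\Lambda=0$.

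For (iii), the mean value theorem for divided differences produces $\xi\in(n_1,n_{d-k+1})\subset(M,N)$ with $\Lambda=f^{(d-k)}(\xi)/(d-k)!$. Since each of the polynomials $f^{(1)},\ldots,f^{(d-1)}$ has only finitely many real zeros, one can choose $M=M(f)\in\N$ to exceed all of them; then $f^{(d-k)}(\xi)\neq 0$ for every $\xi>M$, contradicting $\Lambda=0$. The main obstacle lies in step (i), where the Vandermonde-squaring trick is needed to promote the rational identity $\Lambda/q=\sum_j e_j/P_j$ to the integer divisibility $q\mid\Lambda$; step (ii) is routine, and the mean value theorem furnishes a clean, uniform treatment across all $k$ without case analysis.
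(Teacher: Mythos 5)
Your argument is correct and revolves around the same object the paper uses, namely the divided difference $A=\Lambda=f[n_1,\ldots,n_{d-k+1}]$; but where the paper cites \cite{Sah20} for the integrality of $A$, the bound $|A|\le(DN)^k$, and the fact that $A=0$ forces the $n_i$ below some constant $M(f)$, you supply self-contained proofs. The expansion $\Lambda=\sum_{i=0}^k f_{d-k+i}\,h_i(n_1,\ldots,n_{d-k+1})$ handles integrality and the size bound at once (and the elementary inequality $|f_d|\binom{d}{k}<D^k$ you invoke does hold, since $\binom{d}{k}\le d^k$ and $|f_d|\ge 1$ give $|f_d|\binom{d}{k}\le|f_d|^k d^k<(d|f_d|+1)^k$). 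The mean value theorem for divided differences, $\Lambda=f^{(d-k)}(\xi)/(d-k)!$ for some $\xi\in(n_1,n_{d-k+1})$, then rules out $\Lambda=0$ the moment $M$ exceeds every real zero of $f',\ldots,f^{(d-1)}$; this is the genuinely new ingredient relative to the paper's exposition and gives a clean, uniform-in-$k$ treatment of the non-vanishing step. Your derivation of $q\mid\Lambda$ via Vandermonde-squaring is also sound, though a marginally shorter route is $p$-adic: for any prime $p$ dividing $q$, hypothesis (b) gives $v_p(P_j)=0$, so each $e_j/P_j$ has non-negative $p$-adic valuation and $v_p(\Lambda)\ge v_p(q)$. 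Either way, the overall skeleton --- $q\mid\Lambda$, $|\Lambda|<q$, hence $\Lambda=0$, hence a contradiction once $n_1\ge M$ --- matches the paper's.
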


\begin{proof}

Assume that $1\le n_1<n_2<\ldots<n_{d-k+1}\le N$ satisfy properties (a),(b) from the statement of the lemma. Denote \begin{equation}\label{eq: lagrange}A=\sum_{i=1}^{d-k+1}\frac{f(n_i)-a}{\displaystyle\prod_{1\le j\le d-k+1\atop{j\neq i}}(n_i-n_j)}=\sum_{i=1}^{d-k+1}\frac{f(n_i)}{\displaystyle\prod_{1\le j\le d-k+1\atop{j\neq i}}(n_i-n_j)},\end{equation} 
with the last equality stemming from the well-known identity
$$\sum_{i=1}^{d-k+1}\frac{1}{\displaystyle\prod_{1\le j\le d-k+1\atop{j\neq i}}(n_i-n_j)}=0,$$
which can be obtained by considering the Lagrange interpolation $L(x)$ of the constant 1 at the points $n_1,\ldots,n_{d-k+1}$ and taking the limit $\lim_{x\to\infty}L(x)/x^{d-k}$ (note that $d-k+1\ge 2$ since $k\le d-1$).

It is shown in \cite[Proof of Lemma 4.1]{Sah20} that $A$ (as given by the RHS of (\ref{eq: lagrange})) is an integer and $|A|\le (DN)^k$ for $N$ sufficiently large, hence $|A|<q$. However conditions (a),(b) imply that $q$ divides the numerator and is coprime to the denominator of each summand in the middle expression in (\ref{eq: lagrange}), hence $q|A$. Consequently $A=0$.

It is shown in \cite[Proof of Lemma 4.1]{Sah20} that $A=0$ can only happen if $n_1,\ldots,n_{d-k+1}< M$ for some constant $M=M(f)$. This completes the proof.
\end{proof}

The following corollary to the above lemma essentially appears in \cite{Sah20}.

\begin{cor}\label{cor: sah} Let $f=\sum_{i=0}^df_ix^i\in\Z[x],\,\deg f=d\ge 2$ be a fixed polynomial without (rational) integer roots, $D=d|f_d|+1$. Then for sufficiently large $N\ge N_0(f)$, a prime $p>DN$ and $1\le k\le d-1$, at most $d-k$ of the values $f(1),\ldots,f(N)$ are divisible by $p^k$.\end{cor}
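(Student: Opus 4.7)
The plan is to deduce this corollary directly from Lemma \ref{lem: sah} by contradiction. Suppose that for some prime $p>DN$ and some $1\le k\le d-1$ there are at least $d-k+1$ indices $1\le n_1<n_2<\cdots<n_{d-k+1}\le N$ with $p^k\mid f(n_i)$. Set $q=p^k$ and $a=0$; then $q>(DN)^k$, and condition (a) of the lemma holds automatically.

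Condition (b), that $(q,n_i-n_j)=1$ for $i\ne j$, comes for free from the size constraint on $p$: since $p>DN\ge N$ and $0<|n_i-n_j|<N<p$, the prime $p$ cannot divide $n_i-n_j$, so neither can $p^k$. The only remaining issue is the lower range restriction — Lemma \ref{lem: sah} requires $n_1\ge M$, while our $n_i$ are only known to lie in $[1,N]$. This is precisely where the assumption that $f$ has no rational integer root enters: under this hypothesis $f(n)\ne 0$ for every $1\le n<M$, so $C:=\max_{1\le n<M}|f(n)|$ is a well-defined positive integer depending only on $f$. Enlarging $N_0(f)$ if necessary so that $DN>C$ and $N\ge M$ for all $N\ge N_0(f)$, we get $p>DN>C$, which forces $p\nmid f(n)$ for every $1\le n<M$; \emph{a fortiori} $p^k\nmid f(n)$ for such $n$. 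Therefore every $n_i$ in the hypothesized list must satisfy $n_i\ge M$, Lemma \ref{lem: sah} applies, and a contradiction ensues. This shows that at most $d-k$ values of $f(1),\ldots,f(N)$ can be divisible by $p^k$.

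In summary the argument is a clean two-step affair: the coprimality condition (b) is free because $p>N$, and the no-integer-roots hypothesis is needed only to rule out the finitely many small indices $n<M$ via the crude estimate $|f(n)|\le C<p$. I would not expect any serious obstacle at this stage; all the conceptual content has been absorbed into Lemma \ref{lem: sah} itself, whose proof via the Lagrange interpolation identity (\ref{eq: lagrange}) and the integrality/size bound on $A$ is the place where the actual work was done.
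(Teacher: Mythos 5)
Your argument is correct and follows the paper's own proof essentially line for line: set $q=p^k$, $a=0$, obtain condition (b) for free from $p>N$, and use the no-integer-roots hypothesis together with $p>DN$ to force $n_1\ge M$ so that Lemma \ref{lem: sah} yields the contradiction. The only cosmetic difference is that you introduce the explicit constant $C=\max_{1\le n<M}|f(n)|$ whereas the paper simply notes $0<|f(n)|<N<p$ for $N$ large.
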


\begin{proof} Let $M=M(f)$ be the constant provided by Lemma \ref{lem: sah}. Assume that there exist $1\le n_1<\ldots<n_{d-k+1}<N$ such that $p^k|f(n_i),\,1\le i\le d-k+1$. Then for $a=0,q=p^k>(DN)^k$ conditions (a),(b) from Lemma \ref{lem: sah} hold (condition (b) holds because $0<|n_j-n_i|<N<p\,\,(i\neq j)$, so each $n_j-n_i$ is coprime with $q=p^k$). Since $f$ has no integer roots, for sufficiently large $N$ we have $0<|f(n)|<N<p$ for any $1\le n<M$, so we must have $n_1\ge M$. Hence $n_1,\ldots,n_{d-k+1}$ are a counterexample to the assertion of Lemma \ref{lem: sah} and cannot exist for $N$ sufficiently large.\end{proof}

\begin{proof}[Proof of Theorem \ref{thm: main0}] By Corollary \ref{cor: sah}, for sufficiently large $N$, a prime $p>DN$ and $1\le k\le d-1$, $p^k$ divides at most $d-k$ of the values $f(1),\ldots,f(N)$.
Hence we can apply Proposition \ref{prop: main2} with $\delta_k=d-k$ and obtain 
$\log \ell_f(N)\gtrsim \frac{d-1}VN\log N$ with $$V=\sum_{k=1}^{\min(d-2,\lfloor 3d/4\rfloor)}(d-k)=V_d.$$
\end{proof}

\section{Proof of Theorem \ref{thm: main1}}

We will prove Theorem \ref{thm: main1}(i) by showing that the condition of Proposition \ref{prop: main1} holds for a polynomial and parameter $r$ as in the statement of the theorem, with $\delta=d-r$. Similarly, Theorem \ref{thm: main1}(ii) will follow by applying Proposition \ref{prop: main2} with $\delta_k=d-\max(k,r)$. First we need an algebraic lemma, which will also be used in the proofs of theorems \ref{thm: dec l} and \ref{thm: main2}.

\begin{lem}\label{lem: hom} Let $f=\sum_{i=0}^df_ix^i\in\Z[x],\,f_d\neq 0$ be a squarefree polynomial with roots $\alpha_1,\ldots,\alpha_d$ in $\Qb$, $K=\Q(\alpha_1,\ldots,\alpha_d)$ the splitting field of $f$, $p$ a prime number such that $p\nmid f_d\cdot\disc(f)$, $k\ge 1$. Then there exists a ring homomorphism $\phi:\Z[\alpha_1,\ldots,\alpha_d]\to A$, where $A$ is an extension ring of $\Z/p^k\Z$, such that 
\begin{enumerate}
    \item[(i)] $\phi$ induces a bijection between $\{\alpha_1,\ldots,\alpha_d\}$ and the set of roots of $f$ in $A$.
    \item[(ii)] If $\gamma\in\Z[\alpha_1,\ldots,\alpha_d]$ is integral over $\Z$ and $\phi(\gamma)=0$ then $p^k|N_{K/\Q}(\gamma)$.
\end{enumerate}
\end{lem}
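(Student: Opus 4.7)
My plan is to take $A=\mathcal{O}_K/\mathfrak{p}^k$ for a suitably chosen prime $\mathfrak{p}$ of $\mathcal{O}_K$ lying over $p$, and to let $\phi$ be the map induced by reduction modulo $\mathfrak{p}^k$. The guiding observation is that the hypothesis $p\nmid f_d\cdot\disc(f)$ forces $p$ to be unramified in $K/\Q$: the monicization $g(y)=f_d^{d-1}f(y/f_d)\in\Z[y]$ has roots $\beta_i=f_d\alpha_i\in\mathcal{O}_K$ and discriminant differing from $\disc(f)$ by a power of $f_d$, so $p\nmid\disc(g)=\disc(\Z[\beta_i])$ makes $\Z[\beta_i]$ maximal at $p$ inside $\mathcal{O}_{\Q(\alpha_i)}$; hence $p$ is unramified in each $\Q(\alpha_i)$ and therefore in their compositum $K$.

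Fix such a $\mathfrak{p}$. Unramifiedness gives $\mathfrak{p}^k\cap\Z=p^k\Z$, so $A=\mathcal{O}_K/\mathfrak{p}^k$ really is an extension ring of $\Z/p^k\Z$. Since $p\nmid f_d$, $f_d$ is a unit in the localization $\mathcal{O}_{K,\mathfrak{p}}$, so $\alpha_i=(f_d\alpha_i)/f_d\in\mathcal{O}_{K,\mathfrak{p}}$ and consequently $\Z[\alpha_1,\ldots,\alpha_d]\subseteq\mathcal{O}_{K,\mathfrak{p}}$. I would define $\phi$ as the composition
$$\Z[\alpha_1,\ldots,\alpha_d]\hookrightarrow\mathcal{O}_{K,\mathfrak{p}}\twoheadrightarrow\mathcal{O}_{K,\mathfrak{p}}/\mathfrak{p}^k\mathcal{O}_{K,\mathfrak{p}}\cong\mathcal{O}_K/\mathfrak{p}^k=A.$$

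For (i), I would apply Hensel's lemma in the complete DVR $\hat{\mathcal{O}}_{K,\mathfrak{p}}$, whose reduction mod $\mathfrak{p}^k$ is still $A$. The condition $p\nmid\disc(f)$ implies $\alpha_i-\alpha_j\notin\mathfrak{p}$ for $i\ne j$ (since $\prod_{i<j}(\alpha_i-\alpha_j)^2$ differs from $\disc(f)$ by a $p$-unit), so the images $\bar\alpha_i\in\mathcal{O}_K/\mathfrak{p}$ are $d$ distinct simple roots of $f$ modulo $\mathfrak{p}$; Hensel's lemma lifts each $\bar\alpha_i$ uniquely to a root of $f$ in $A$, these lifts coincide with $\phi(\alpha_i)$, and every root of $f$ in $A$ must reduce mod $\mathfrak{p}$ to some $\bar\alpha_i$, giving the desired bijection.

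For (ii), if $\gamma\in\Z[\alpha_1,\ldots,\alpha_d]$ is integral over $\Z$ then $\gamma\in\mathcal{O}_K$; the condition $\phi(\gamma)=0$ amounts to $v_\mathfrak{p}(\gamma)\ge k$. Using that $K/\Q$ is Galois and $e(\mathfrak{p}/p)=1$, I would compute
$$v_p(N_{K/\Q}(\gamma))=v_\mathfrak{p}\Big(\prod_{\sigma\in\Gal(K/\Q)}\sigma(\gamma)\Big)=\sum_{\sigma}v_\mathfrak{p}(\sigma(\gamma))\ge v_\mathfrak{p}(\gamma)\ge k,$$
since $\sigma(\gamma)\in\mathcal{O}_K$ makes every term nonnegative while the $\sigma=\mathrm{id}$ term already contributes $\ge k$. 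No deep obstacle arises; the only mild subtlety is handling the non-monic case by working in the localization $\mathcal{O}_{K,\mathfrak{p}}$ in which $f_d$ becomes invertible.
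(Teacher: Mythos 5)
Your proof is correct and takes essentially the same route as the paper's: both construct $A=\mathcal{O}_K/\mathfrak{p}^k$ for a prime $\mathfrak{p}$ over $p$, define $\phi$ via a localization in which $f_d$ becomes a unit (the paper localizes $\mathcal{O}_K$ at the multiplicative set generated by $f_d$, you localize at $\mathfrak{p}$ — either does the job of making $\Z[\alpha_1,\ldots,\alpha_d]$ land in the source of the reduction map), use Hensel's lemma to establish the bijection in (i), and deduce (ii) from $\mathfrak{p}^k\mid\gamma$ by passing to norms. Your valuation computation $v_p(N_{K/\Q}(\gamma))=\sum_\sigma v_\mathfrak{p}(\sigma(\gamma))\ge v_\mathfrak{p}(\gamma)\ge k$ is an equivalent reformulation of the paper's one-line "take norms and use $p\mid N_{K/\Q}(\mathfrak{p})$"; and your expanded justification that $p\nmid f_d\cdot\disc(f)$ forces $p$ unramified (via the monicization $g$) fills in a step the paper leaves implicit.
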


\begin{proof} Let $\mathcal O$ be the ring of integers of $K$ and $B=\mathcal O_{f_d}$ (localization at $f_d$). Let $\mathfrak p$ be a prime ideal of $\mathcal O$ lying over $p$. Then since $p\nmid f_d$, the inclusion $\mathcal O\hookrightarrow B$ induces an isomorphism $A:=\mathcal O/\mathfrak p^k\isom B/\mathfrak p^kB$. Since $p\nmid f_d\cdot\disc(f)$, $p$ is unramified in the extension $K/\Q$ and therefore $\mathfrak p^k\cap\Z=p^k$ and $A$ is an extension ring of $\Z/p^k\Z$.

Take $\phi$ to be the composition of the inclusion $\Z[\alpha_1,\ldots,\alpha_d]\hookrightarrow B$, the quotient map $B\to B/\mathfrak p^kB$, and the isomorphism $B/\mathfrak p^kB\isom A$. Note that $\phi$ takes roots of $f$ to roots of $f$ in $A$ and further since $p\nmid f_d\cdot\disc(f)$ we have $\alpha_i-\alpha_j\not\in\mathfrak p B\supset\mathfrak p^kB$ if $i\neq j$, so $\phi$ is injective on $\{\alpha_1,\ldots,\alpha_d\}$. Finally note that since $p\nmid f_d\cdot\disc(f)$, the reduction of $f$ modulo $\mathfrak p$ has at most $d$ roots in $\mathcal O/\mathfrak p$ and each of them has exactly one lifting to $A=\mathcal O/\mathfrak p^k$ by Hensel's lemma. Since $\phi(\alpha_1),\ldots,\phi(\alpha_d)$ are $d$ distinct roots of $f$ modulo $\mathfrak p^k$, these must be all of the roots, giving (i).

For (ii) note that $\phi(\gamma)=0$ is equivalent to $\mathfrak p^k|\gamma$. Taking norms and using $p|N_{K/\Q}(\mathfrak p)$, we obtain $p^k|N_{K/\Q}(\gamma)$.

\end{proof}

The next lemma is the key result for the proof of Theorem \ref{thm: main1}.

\begin{lem}\label{lem: main1} Let $f=\sum_{i=0}^df_ix^i\in\Z[x],\,\deg f=d$ be squarefree and without (rational) integer roots, $S$ the set of roots of $f$ in $\Qb$, $S_1,\ldots,S_r$ a partition of $S$ with $S_i$ potent. There exists a constant $D=D(f)>0$ such that for any sufficiently large $N\ge N_0(f)$ and any prime $p>DN$, $p$ divides at most $d-r$ of the values $f(1),f(2),\ldots,f(N)$.
\end{lem}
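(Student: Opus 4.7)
The plan is to combine the reduction map furnished by Lemma \ref{lem: hom} (applied with $k=1$) with the potency relations for the $S_i$, bounded by a pigeonhole argument. Fix $D$ and $M$ to be chosen later, let $p>DN$ be a prime, and assume for contradiction that there exist $m=d-r+1$ indices $n_1<n_2<\ldots<n_m$ in $[1,N]$ with $p\mid f(n_i)$. For $N$ large enough, the values $|f(n)|$ for $n\in[1,M]$ are bounded by a constant, so $p>DN$ cannot divide any of them; hence we may assume $n_i\ge M$ for all $i$. For $p$ large we also have $p\nmid f_d\cdot\disc(f)$, so Lemma \ref{lem: hom} provides a homomorphism $\phi:\Z[\alpha_1,\ldots,\alpha_d]\to A$ from the subring generated by the roots into an extension ring of $\F_p$, carrying $\{\alpha_1,\ldots,\alpha_d\}$ bijectively onto the set of roots of $f$ in $A$.

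Next, since $\phi(f(n_i))=f(\phi(n_i))=0$ and $\phi(n_i)\in\F_p\subset A$, each $\phi(n_i)$ must equal $\phi(\alpha_{\sigma(i)})$ for a unique $\sigma(i)\in\{1,\ldots,d\}$. The assignment $i\mapsto\sigma(i)$ is injective: if $\sigma(i)=\sigma(j)$ then $\phi(n_i-n_j)=0$ in $A$, which restricts to $p\mid n_i-n_j$; but $|n_i-n_j|<N<p$ forces $i=j$. So we obtain $m=d-r+1$ distinct roots $\alpha_{\sigma(1)},\ldots,\alpha_{\sigma(m)}$, leaving only $d-m=r-1$ roots unrepresented. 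By pigeonhole, at least one part $S_j$ lies entirely in the image of $\sigma$; write $S_j=\{\beta_1,\ldots,\beta_s\}$, let $n_{\tau(k)}$ be the index with $\phi(n_{\tau(k)})=\phi(\beta_k)$, and let $a_1,\ldots,a_s\in\N$ and $c_j\in\Z$ be such that $\sum_{k=1}^s a_k\beta_k=c_j$ (which exist by potency of $S_j$).

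Applying $\phi$ to $\sum_{k}a_k\beta_k-c_j=0$ gives
\begin{equation*}
\phi\!\left(\sum_{k=1}^s a_k n_{\tau(k)}-c_j\right)=0,
\end{equation*}
and since the integer argument reduces to an element of $\F_p\subset A$, this yields $p\mid \sum_k a_k n_{\tau(k)}-c_j$. Let $A_j=\sum_k a_k$. Choosing $D>\max_j A_j$ ensures that for $N$ large, $|\sum_k a_k n_{\tau(k)}-c_j|\le A_jN+|c_j|<DN<p$, so the divisibility forces the equality $\sum_k a_k n_{\tau(k)}=c_j$. But since every $a_k\ge 1$ and $n_{\tau(k)}\ge M$, the left-hand side is at least $M$; choosing $M>\max_j|c_j|$ from the outset produces a contradiction.

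The main obstacle, and the reason for the initial truncation $n_i\ge M$, is that the potency relation only gives a congruence, not an identity; one then has to rule out the finitely many "accidental" equalities $\sum_k a_k n_{\tau(k)}=c_j$ arising from small $n_{\tau(k)}$. The bound $|f(n)|\le C(M)$ for $n\in[1,M]$ together with $p>DN>C(M)$ (for $N$ large, since $f$ has no integer roots) kills those small values a priori, so the argument goes through uniformly for $N\ge N_0(f)$ with $D,M$ depending only on $f$ and the chosen potency data.
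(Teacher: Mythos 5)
Your proof is correct and follows essentially the same route as the paper's: invoke Lemma \ref{lem: hom} with $k=1$ to get the reduction map $\phi$, use pigeonhole on the partition $S_1,\ldots,S_r$ to find a potent part fully represented among the roots $\bar n_i$, push the potency relation through $\phi$ to get a congruence $\sum a_k n_{\tau(k)}\equiv c_j\pmod p$, and then use the size bound from $p>DN$ together with a lower bound on the $n_i$ (here obtained via the truncation $n_i\ge M$) to rule it out. The only cosmetic difference is bookkeeping: the paper shows the integer $\sum b_{ij}n_{ij}-v_i$ is strictly positive and $<p$, while you first force the equality $\sum a_k n_{\tau(k)}=c_j$ and then derive the contradiction; these are interchangeable.
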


\begin{proof}
Denote $S_i=\{\alpha_{ij}:\,1\le j\le s_i\}$. Since $S_i$ are assumed potent, for each $1\le i\le r$ there exist $0< b_{ij}\in\Z,\,1\le j\le s_i$ such that \begin{equation}\label{eq: comb b alpha}\sum_{j=1}^{s_i}b_{ij}\alpha_{ij}={v_{i}}\quad (1\le i\le r)\end{equation} for some $v_{i}\in\Z$.
We take $D=\sum_{i,j}b_{ij}+\sum_i|v_i|$ and claim that the assertion of the lemma holds with the constant $D$.

Let $p>DN$ be a prime. For sufficiently large $N$ we have $p\nmid f_d\cdot\disc(f)$. Lemma \ref{lem: hom} applies and there exists an extension ring $A\supset\F_p$ and a ring homomorphism $\phi:\Z[S]\to A$ which induces a bijection from $S$ to the set of roots of $f$ in $A$. For any $\alpha\in\Z[S]$ (resp. $R\subset S$) we denote $\bar\alpha=\phi(\alpha)$ (resp. $\bar R=\phi(R)$, the direct image of $R$ under $\phi$). Since $\phi$ is injective on $S$, $\{\bar S_1,\ldots,\bar S_r\}$ is a partition of $\bar S$ and $|\bar S_i|=s_i,|\bar S|=d$.

Assume by way of contradiction that $p$ divides at least $d-r+1$ of the values $f(1),f(2),\ldots,f(N)$. In other words the polynomial $f$ has at least $d-r+1$ distinct roots modulo $p$ with integer representatives in the range $\{1,2,\ldots,N\}$ (note that $p>N$), denote the set of these roots by $T$. Since $\phi$ is a homomorphism we have $T\subset \bar S$. Since $|T|\ge d-r+1$, there must be some $1\le i\le r$ such that $\bar S_i\subset T$, otherwise (if $T$ misses some element from each $\bar S_i,\,1\le i\le r$) we would have $|T|\le d-r$.

Applying $\phi$ to (\ref{eq: comb b alpha}) we have 
$$\sum_{j=1}^{s_i}b_{ij}\bar\alpha_{ij}=\bar v_i$$
and since $\bar\alpha_{ij}\in\bar S_i\subset T$ we can write $\bar\alpha_{ij}=\bar n_{ij}$ for some integers $1\le n_{ij}\le N$, so we have
\begin{equation}\label{eq: div by p}\sum_{j=1}^{s_i}b_{ij}n_{ij}-v_i\equiv 0\pmod p.\end{equation}

Note that $f(\bar n_{ij})=f(\bar\alpha_{ij})=0$, so $f(n_{ij})\equiv 0\pmod p$, however $f(n_{ij})\neq 0$ since $f$ is assumed to be without integer roots. Hence for sufficiently large $N\ge\sum_{j=0}^d|f_jv_i^j|$ and $p>N$ we must have $n_{ij}>|v_i|$ (otherwise $0<f(n_{ij})<p$, but $p|f(n_{ij})$, a contradiction). Consequently the LHS of (\ref{eq: div by p}) is positive but also $\le \sum_{i,j}b_{ij}N+\sum_i|v_i|N=DN<p$, a contradiction to (\ref{eq: div by p}). 

\end{proof}

\begin{proof}[Proof of Theorem \ref{thm: main1}] Assume the setup of Theorem \ref{thm: main1}.

{\bf (i).}
 By Lemma \ref{lem: main1} we may apply Proposition \ref{prop: main1} with $\delta=d-r$ and conclude that
$$\log L_f(N)\gtrsim \frac{d-1}{d-r}N\log N.$$

{\bf (ii).} By Lemma \ref{lem: main1} combined with Corollary \ref{cor: sah} we may apply Proposition \ref{prop: main2} with $\delta_k=d-\max(k,r)$ and conclude that
$$\log \ell_f(N)\gtrsim \frac{d-1}{V_{d,r}}N\log N$$ with $V_{d,r}$ given by (\ref{eq: Vdr}).\end{proof}

\section{Proof of Theorem \ref{thm: dec l}}

We will prove Theorem \ref{thm: dec l} by showing that the condition of Proposition \ref{prop: main2} holds for a polynomial as in the statement of the theorem and 
\begin{equation}\label{eq: delta for dec l}\delta_k=\max(d-kr,r-\lfloor kr/d\rfloor)=\left[\begin{array}{ll}d-kr,&1\le k\le   d/r-1,\\ r-\lfloor kr/d\rfloor,& d/r\le k\le d-1.\,\end{array}\right.\end{equation}

\begin{lem}\label{lem: dec l}
Let $f=g\circ h,\,g,h\in\Z[x]$ be a decomposable squarefree polynomial without (rational) integer roots, $\deg g=r\ge 2,\,\deg h\ge 2$, $\deg f=d$. There exists a constant $D=D(f)>0$ such that for all sufficiently large $N\ge N_0(f)$, prime $p>DN$, and $1\le k\le d-1$, there are at most $\delta_k$ values among $f(1),\ldots,f(N)$ divisible by $p^k$, where $\delta_k$ is given by (\ref{eq: delta for dec l}).
\end{lem}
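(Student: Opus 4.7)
The plan is to exhibit the constant $D$ and, writing $s := \deg h$ so that $d = rs$, show that for any prime $p > DN$ the count $t$ of $n \in [1,N]$ with $p^k \mid f(n)$ satisfies both $t \le d - kr$ (dominant when $k \le s-1$) and $t \le r - \lfloor kr/d \rfloor = r - \lfloor k/s \rfloor$ (dominant when $k \ge s$), giving $t \le \delta_k$. Following the setup of Lemma~\ref{lem: main1}, I invoke Lemma~\ref{lem: hom} applied to $f$ at level $p^k$: it produces a ring $A = \mathcal{O}_K/\mathfrak{p}^k$ and a map $\phi : \Z[S] \to A$ bijective on $S$ and pairwise separating modulo $\mathfrak{p}$ (using $p \nmid \disc f$). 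Each $n$ with $p^k \mid f(n)$ then corresponds to a unique $\alpha(n) \in S$ via $n \equiv \phi(\alpha(n)) \pmod{\mathfrak{p}^k}$; writing $\alpha(n) \in R_{i(n)}$ assigns it a cluster index, and $h(n) \equiv \phi(\beta_{i(n)}) \pmod{\mathfrak{p}^k}$.

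For the first bound $t \le d - kr$, I group the $n$'s by cluster: within cluster $i$, any two $n, n'$ satisfy $h(n) \equiv h(n') \pmod{p^k}$ (using $\mathfrak{p}^k \cap \Z = p^k\Z$ for unramified $p$). Lemma~\ref{lem: sah} applied to $h(x) - h(n^*)$ (degree $s$, with $n^*$ fixed in the cluster, $a=0$, $q=p^k$) bounds the cluster's size by $s - k$ for $k \le s-1$, provided $D \ge s|h_s| + 1$; summing over the $r$ clusters yields $t \le r(s-k) = d-kr$.

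For the second bound $t \le r - \lfloor k/s \rfloor$, I would first show that for $D$ large enough and $k \ge s$ each cluster contains at most one $n$. Distinct $n, n'$ in the same cluster would give $p^k \mid h(n)-h(n')$ with $|h(n)-h(n')| \le CN^s$, but for $k \ge s$ and $D^s > C$ one has $p^k \ge p^s > CN^s$, forcing $h(n) = h(n')$; combined with $n, n' \ge M_0$ (ensured since $p^k \mid f(n)$ and $p > DN$ force $n \to \infty$ as $N \to \infty$, via $|f(n)| \ge p^k$ and the polynomial growth of $|f|$) and the eventual monotonicity of $h$, this gives $n = n'$. Therefore the values $m_j := h(n_j)$ form $t$ distinct integers in $[1, CN^s]$ with $g(m_j) \equiv 0 \pmod{p^k}$ and pairwise distinct residues mod $p$ (using $p \nmid \disc g$). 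Now apply Lemma~\ref{lem: sah} a second time, to the polynomial $g$ of degree $r$, with $k_g := r - t + 1$, $a = 0$, $q = p^k$, and range $[1, CN^s]$. Its quantitative hypothesis $q > ((r|g_r|+1)\cdot CN^s)^{k_g}$ holds whenever $p > DN$ with $D \ge ((r|g_r|+1) C)^{1/s}$ and $k \ge s k_g$, the latter condition being equivalent to $t \ge r - \lfloor k/s \rfloor + 1$. Under this assumption Lemma~\ref{lem: sah} excludes the existence of $t = r - k_g + 1$ such integers, contradicting the choice of the $m_j$; hence $t \le r - \lfloor k/s \rfloor$.

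The main obstacle is keeping the constants consistent: a single $D = D(f)$ must unlock the nondegeneracy hypotheses of Lemma~\ref{lem: hom} (avoiding $\disc(f)\disc(g) f_d g_r$), Sah's condition for $h$ in the first argument, the one-per-cluster estimate $p^s > CN^s$, and Sah's condition for $g$ on the enlarged range $CN^s$. Each of these is a finite threshold independent of $N$, so taking $D$ to be the maximum of them closes the proof for all $N$ sufficiently large.
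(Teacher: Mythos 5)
Your proof is correct and follows the same strategy as the paper: partition the roots into the fibers $h^{-1}(\beta_i)$, invoke Lemma~\ref{lem: hom} to transport the cluster structure to residues modulo $p^k$, apply Lemma~\ref{lem: sah} to $h$ within each cluster when $k\le s-1$, and apply Lemma~\ref{lem: sah} to $g$ on the distinct values $m_j=h(n_j)$ when $k\ge s$. The differences are cosmetic: you bound each cluster and sum rather than argue by pigeonhole, and you deduce the one-per-cluster estimate from $p^k>|h(n)-h(n')|$, whereas the paper gets distinctness of the $m_j$ directly from the eventual monotonicity of $h$; otherwise the choice of parameters in the second application of Lemma~\ref{lem: sah} (your $k_g=r-t+1$ versus the paper's $k'=\lfloor k/s\rfloor$) is a reparametrization of the same contradiction.
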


\begin{proof}

Denote 
$s=\deg h=\frac dr,\,f=\sum_{i=0}^df_ix^i,\, g=\sum_{i=0}^rg_ix^i,\,h=\sum_{i=0}^sh_ix^i.$
Let $\alpha_1,\ldots,\alpha_d\in\Qb$ be the roots of $f$ and $\beta_1,\ldots,\beta_r\in\Qb$ the roots of $g$. Since $f=g\circ h$ is squarefree, the sets $S_i=h^{-1}(\beta_i),\,1\le i\le r$ form a partition of $\{\alpha_1,\ldots,\alpha_d\}$ with $|S_i|=s$.

Let $1\le k\le d-1$. By Lemma \ref{lem: hom}, for a sufficiently large prime $p$ there exists an extension ring $A\supset\Z/p^k\Z$ and a ring homomorphism $\phi:\Z[\alpha_1,\ldots,\alpha_d]\to A$ which induces a bijection from $S=\{\alpha_1,\ldots,\alpha_d\}$ to the set of roots of $f$ in $A$. For any $\alpha\in\Z[\alpha_1,\ldots,\alpha_d]$ denote $\bar\alpha=\phi(\alpha)$ and for a set $T\subset\Qb$ denote $\bar T=\phi(S)\subset A$. Since $\phi$ is injective on $S$, the sets $\bar S_1,\ldots,\bar S_r$ form a partition of $\bar S$.

We pick $$D=\max\left((r|g_r|+1)(|h_s|+1),s|h_s|+1\right)$$ and let $M\in\N$ be a constant for which the assertion of Lemma \ref{lem: sah} holds for both the polynomials $g$ and $h$ (in place of $f$) with the respective constants $D_g=r|g_r|+1,D_h=s|h_s|+1$.
We separate the rest of the proof into two cases corresponding to the ranges $k<s$ and $k\ge s$ appearing in (\ref{eq: delta for dec l}).

\case{$k\le s-1$} By the assertion of Lemma \ref{lem: sah} applied to the polynomial $h$ with $q=p^k$, for any $N\ge M$, a prime $p>DN$ and any $a\in\Z$, at most $s-k$ of the values $h(M)-a,h(M+1)-a,\ldots,h(N)-a$ are divisible by $p^k$.

We claim that for  sufficiently large $N$ and a prime $p>DN$, at most $\delta_k=d-kr$ of the values $f(1),\ldots,f(N)$ are divisible by $p^k$, or equivalently (since $p^k>p>DN>N$) at most $d-kr$ elements of $\bar S$ fall in $\Z/p^k\Z$ and have representatives in the interval $\{1,\ldots,N\}$.

Assume by way of contradiction that there are $d-kr+1=r(s-k)+1$ such elements. 
Then by the pigeonhole principle $s-k+1$ of them fall in some $\bar S_i$, say $\bar S_1=h^{-1}(\beta_1)$. Assume WLOG that these are $\bar\alpha_1,\ldots,\bar\alpha_{s-k+1}$ and that $\bar\alpha_i=\bar n_i,\,1\le n_1<n_2<\ldots<n_{s-k+1}\le N$. 
For sufficiently large $N$ we cannot have $n_1<M$. Indeed since $f(\bar n_1)=f(\bar\alpha_1)=0$ and therefore $f(n_1)\equiv 0\pmod {p^k}$. The polynomial $f$ is assumed to be without integer roots, so $0<|f(n_1)|<p^k$ for $1\le n_1<M$, large enough $N$ and $p>DN\ge D_hN$. This contradicts $p^k|f(n_1)$, establishing the claim $n_1\ge M$.

Therefore we may assume $M\le n_1<n_2<\ldots<n_{s-k+1}\le N$. 
Denote $a=h(n_1)$. Then $\bar a=h(\bar \alpha_1)=\bar\beta_1=h(\bar \alpha_i)=h(\bar n_i),\,1\le i\le s-k+1$ and consequently $p^k|h(n_i)-a$ for all $1\le i\le s-k+1$ (here we used $\bar\alpha_i\in\bar S_1=h^{-1}(\beta_1)$). This contradicts the assertion of Lemma \ref{lem: sah} for the polynomial $h$ (with $q=p^k$; note that $(p^k,n_i-n_j)=1\,(i\neq j)$ because $0<|n_i-n_j|<N<DN<p$) assumed above, establishing our claim and the assertion of the present lemma in the case $k\le s-1$. 

\case{$k\ge s$} We claim that for large enough $N$ and prime $p>DN$, at most $\delta_k=r-\lfloor k/s\rfloor$ of the values $f(1),\ldots,f(N)$ are divisible by $p^k$. 
Assume that $1\le n_1<n_2<\ldots<n_{r-\lfloor k/s\rfloor+1}\le N$ are such that $p^k|f(n_i),\,1\le i\le r-\lfloor k/s\rfloor+1$. 

We may assume WLOG that $h_s>0$, otherwise replace $h$ with $-h$ and $g(x)$ with $g(-x)$ (this leaves $f$ unchanged).

Denote $m_i=h(n_i),\,1\le i\le r-\lfloor k/s\rfloor+1$. First we claim that for sufficiently large $N$ the $m_i$ are all distinct and $m_i\ge M$. Indeed since $p|f(n_i)$, $p$ is large and $f$ has no integer roots, each $n_i$ must be large. Since we assumed $h_s>0$ we have $h(n_{r-\lfloor k/s\rfloor+1})>\ldots>h(n_2)>h(n_1)>M$.
Now for sufficiently large $N$ we have $$m_i=h(n_i)\le (h_s+1)N^s<(DN)^s<p^s\le p^k$$ and so $\bar m_1,\ldots,\bar m_{r-\lfloor k/s\rfloor+1}$ are distinct. Consequently $\bar m_i=f(\bar n_i)=f(\bar\alpha_i)=\bar\beta_i$ for a suitable numbering of the roots $\beta_i$ of $g$. 

For $p>|g_r\cdot\disc(g)|$ large the $\bar\beta_i$ are distinct modulo $pA$, so the $m_i$ are distinct modulo $p$. We may apply the assertion of Lemma \ref{lem: sah} to the polynomial $g$ with $k'=\lfloor k/s\rfloor\le r-1, N'=(h_s+1)N^s$ in place of $k,N$, $a=0$ and 
$$q=p^k> (DN)^k\ge\left((h_s+1)(r|g_r|+1)N\right)^{k}\ge ((r|g_r|+1)N')^{\lfloor k/s\rfloor}=(D_gN')^{\lfloor k/s\rfloor}.$$
Since the $M\le m_i\le N$ are $r-\lfloor k/s\rfloor$ distinct integers with $(q,m_i-m_j)=1\,\,(i\neq j)$ and $q|f(m_i)$ we obtain a contradiction to the assertion of Lemma \ref{lem: sah}. This concludes the proof in the case $k\ge s$.

\end{proof}

\begin{proof}[Proof of Theorem \ref{thm: dec l}]
By Lemma \ref{lem: dec l} we may apply Proposition \ref{prop: main2} with $\delta_k$ given by (\ref{eq: delta for dec l}) and conclude that
$$\log \ell_f(N)\gtrsim \frac{d-1}{W_{d,r}}N\log N$$ with $W_{d,r}$ given by (\ref{eq: Wdr}).
\end{proof}

\section{Proof of Theorem \ref{thm: main2}}

We will prove Theorem \ref{thm: main2} by showing that the condition of Proposition \ref{prop: main2} holds for a polynomial as in the statement of the theorem and $\delta_k=\min(d-k,\lceil e/k\rceil-1)$, where $e=|\Gal(f/\Q)|$.

\begin{lem}\label{lem: int comb} Let $N,n_1,n_2,\ldots,n_t$ be natural numbers with $n_i\le N$. There exist integers $a_1,\ldots,a_t$, not all zero, such that $|a_i|\le (tN)^{1/t}+1$ and $|a_1n_1+\ldots+a_tn_t|\le (tN)^{1/t}$.\end{lem}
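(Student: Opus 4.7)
The plan is to prove this by a standard Dirichlet/pigeonhole argument. Set $L=(tN)^{1/t}$ and $K=\lfloor L\rfloor$. I would consider the set of tuples $\mathcal{A}=\{0,1,\ldots,K\}^t$, which has cardinality $(K+1)^t$. Since $K+1>L$, we have $(K+1)^t>L^t=tN$.

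To each tuple $a=(a_1,\ldots,a_t)\in\mathcal{A}$ I associate the linear form $S(a)=a_1n_1+\cdots+a_tn_t$. Because $0\le a_i\le K$ and $0<n_i\le N$, each such sum lies in the interval $[0,KtN]$. I would partition this interval into $tN$ subintervals each of length $K$, namely $[jK,(j+1)K]$ for $j=0,1,\ldots,tN-1$ (with the obvious agreement about endpoints). By the pigeonhole principle, since $(K+1)^t>tN$, there must exist two distinct tuples $a,a'\in\mathcal{A}$ whose sums $S(a),S(a')$ fall in the same subinterval.

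Setting $b_i=a_i-a_i'$ then yields a nonzero tuple of integers with $|b_i|\le K\le L\le L+1$, and furthermore
\[|b_1n_1+\cdots+b_tn_t|=|S(a)-S(a')|\le K\le (tN)^{1/t},\]
which is exactly the conclusion required (in fact slightly stronger in the coefficient bound, as the statement allows $|a_i|\le (tN)^{1/t}+1$).

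There is no real obstacle here — the only thing to check carefully is the strict inequality $(K+1)^t>tN$, which holds because $K+1>L$ regardless of whether $L$ is an integer, so the pigeonhole step is valid in every case. The $+1$ appearing in the hypothesis $|a_i|\le(tN)^{1/t}+1$ is just slack to accommodate the rounding from $L$ to $K$, and our bound meets it comfortably.
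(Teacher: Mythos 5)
Your proof is correct and uses essentially the same pigeonhole argument as the paper: consider the sums $\sum a_i n_i$ over boxes of nonnegative integer tuples and find two with close values. The only cosmetic differences are that the paper works with $C=\lceil (tN)^{1/t}\rceil$ and bounds the gap between two of the $(C+1)^t$ values directly by $\tfrac{tCN}{(C+1)^t-1}$, whereas you take $K=\lfloor (tN)^{1/t}\rfloor$ and partition the range into $tN$ boxes of width $K$; your choice of $\lfloor\cdot\rfloor$ in fact yields the marginally sharper coefficient bound $|a_i|\le (tN)^{1/t}$, so the $+1$ slack in the statement is unused.
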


\begin{proof} Denote $C=\lceil(tN)^{1/t}\rceil$. For each tuple $\mathbf b=(b_1,\ldots,b_t)\in\{0,1,\ldots,C\}^t$ consider the value $\psi(\mathbf b)=\sum_{i=1}^tb_in_i$. We have $0\le\psi(\mathbf b)\le tCN$, hence among the $(C+1)^t$ possible tuples there must be two $\mathbf b=(b_1,\ldots,b_t)\neq\mathbf b'=(b_1',\ldots,b_t')$ with 
$|\psi(\mathbf b)-\psi(\mathbf b')|<\frac{tCN}{(C+1)^t-1}<\frac{tN}{C^{t-1}}<(tN)^{1/t}$. Taking $a_i=b_i-b_i'$ and noting that $\psi(\mathbf b)-\psi(\mathbf b')=\sum_{i=1}^ta_in_i$ gives the desired result.\end{proof}

We now state and prove the key lemma which will allow us to prove Theorem \ref{thm: main2}.

\begin{lem}\label{lem: pre lem main2} Let $f\in\Z[x],\,\deg f=d$ be squarefree and without (rational) integer roots, $S$ the set of roots of $f$ in $\Qb$, $S_1,\ldots,S_r$ a partition of $S$ with $S_i$ potent and $r$ satisfying $\dim\langle S\cup\{1\}\rangle_\Q=d+1-r$. Denote $G_f=\Gal(f/\Q)$.
Then there exists a constant $D=D(f)>0$ such that for any sufficiently large $N> N_0(f)$, a prime $p>DN$ and $k\ge 1$, $p^k$ divides at most
$\big\lceil\frac{|G_f|}k\big\rceil-1$ of the values $f(1),f(2),\ldots,f(N)$.
\end{lem}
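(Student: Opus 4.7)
The plan is to argue by contradiction: suppose $t\ge\lceil e/k\rceil$ of the values $f(1),\ldots,f(N)$ are divisible by $p^k$, and derive a contradiction by combining a Liouville-style height bound (via Lemma \ref{lem: int comb} and the norm estimate of Lemma \ref{lem: hom}(ii)) with the positivity argument of Lemma \ref{lem: main1}. For $p>DN$ with $D=D(f)$ taken large, Lemma \ref{lem: hom} provides a ring homomorphism $\phi:\Z[S]\to A$ extending the reduction $\Z\to\Z/p^k$ and inducing a bijection between $S$ and the roots of $f$ in $A$. The images $\bar n_i$ are distinct in $A$ (since $p>N$), so each $n_i$ corresponds uniquely to a distinct root $\alpha_{j_i}\in S$ via $\phi(\alpha_{j_i})=\bar n_i$; set $T=\{\alpha_{j_1},\ldots,\alpha_{j_t}\}$. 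Since $f$ has no rational integer roots, $|f(n)|$ is bounded on any fixed initial interval, so after enlarging $D$ we may assume $n_i\ge M$ for all $i$ and a fixed constant $M=M(f)$.

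The key algebraic step is (a) ruling out $S_u\subset T$ for every $u$, and (b) deducing that $\{\alpha_{j_1},\ldots,\alpha_{j_t}\}\cup\{1\}$ is linearly independent over $\Q$. For (a), if $S_u\subset T$ then the potent relation $\sum_{\alpha\in S_u}b_\alpha\alpha=v_u$ maps via $\phi$ to the congruence $\sum_{\alpha\in S_u}b_\alpha n_{i(\alpha)}\equiv v_u\pmod{p^k}$; since $|\sum b_\alpha n_{i(\alpha)}-v_u|\le (\sum b_\alpha)N+|v_u|<p^k$ for $D$ sufficiently large, the exact equation $\sum b_\alpha n_{i(\alpha)}=v_u$ must hold, which is impossible for $n_i\ge M$ once $M>\max_u v_u$. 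For (b), the dimension hypothesis asserts that every $\Q$-linear relation on $S\cup\{1\}$ is a $\Q$-linear combination of the $r$ potent relations; any such combination whose restriction is supported in $T\cup\{1\}$ must have zero coefficient on every $u$ with $S_u\not\subset T$ (by positivity of the $b_\alpha$), so by (a) the combination is trivial.

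Then I would apply Lemma \ref{lem: int comb} to $(n_1,\ldots,n_t)$ to obtain integers $(a_1,\ldots,a_t)\ne 0$ with $|a_i|,|\sum a_i n_i|\le (tN)^{1/t}+1$, and form $\eta=\sum a_i\alpha_{j_i}-\sum a_i n_i\in K$. Then $\phi(\eta)=0$, and by (b) $\eta\ne 0$. Scaling by a suitable power $f_d^\ell$ to land in $\mathcal O_K$ and invoking Lemma \ref{lem: hom}(ii) yields $p^k\mid f_d^{\ell e}N_{K/\Q}(\eta)$, and since $p\nmid f_d$ we conclude $|N_{K/\Q}(\eta)|\ge p^k/f_d^{\ell e}$. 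On the other hand each Galois conjugate satisfies $|\sigma\eta|\le C_1 N^{1/t}$ with $C_1=C_1(f,t)$, so $|N_{K/\Q}(\eta)|\le C_1^e N^{e/t}$. Combining with $p>DN$ gives $D^k N^k\le C_2 N^{e/t}$ for some $C_2=C_2(f,t)$; since $t\ge\lceil e/k\rceil$ implies $k\ge e/t$, this fails for $N$ large (and with $D$ large enough to handle the boundary case $k=e/t$), yielding the desired contradiction.

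The main obstacle is step (b): translating the dimension hypothesis into the concrete combinatorial statement that any rational relation on $S\cup\{1\}$ whose support misses at least one element of each $S_u$ must be trivial. This is where both the positivity of the coefficients in the definition of potency and the disjointness of the $S_u$'s are used crucially; the rest of the argument is a fairly standard Liouville-type height computation, and the exponent $e/t$ in the norm bound is precisely what makes the threshold $t=\lceil e/k\rceil$ the natural one.
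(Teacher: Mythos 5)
Your proposal is correct and uses the same core ingredients as the paper's proof: the local homomorphism from Lemma \ref{lem: hom}, the pigeonhole estimate of Lemma \ref{lem: int comb}, the positivity argument from Lemma \ref{lem: main1}, the dimension hypothesis, and a Liouville-style norm estimate giving the threshold $t<e/k$. The only real difference is organizational. The paper forms $\gamma=f_d\sum a_i(\alpha_i-n_i)$ first and then splits into cases $\gamma\neq 0$ (where the norm bound finishes) and $\gamma=0$ (where the dimension hypothesis forces some $S_i\subset T$, and the potency/positivity argument then yields a contradiction, so this case cannot occur). You instead front-load the positivity argument to rule out $S_u\subset T$ for every $u$ unconditionally, then invoke the dimension hypothesis (via the observation that the $r$ potent relations span the full relation space of $S\cup\{1\}$, and positivity of the $b_\alpha$ forces zero coefficients on any $u$ with $S_u\not\subset T$) to conclude that $T\cup\{1\}$ is linearly independent, so that $\eta\neq 0$ is automatic and the case split disappears. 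The two proofs are logically equivalent; your reorganization is arguably slightly cleaner since the ``impossible case'' never appears. Two small points worth tightening: the bound in step (a) should use $M>\max_u |v_u|$ rather than $M>\max_u v_u$; and the constant $C_1=C_1(f,t)$ should be made uniform in $t$ by noting $t\le d$ (otherwise $D$ cannot be chosen purely as a function of $f$), which is what the paper does by taking $D\ge f_d^t(2C+1)^t t$ and implicitly using $t\le d$.
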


\begin{proof} 
Assume that $p^k|f(n_1),\ldots,f(n_t)$ for some $1\le n_i\le N$. We need to show that $t<\frac{|G_f|}k$ whenever $p>DN$ for a suitable constant $D$ and $N$ sufficiently large. Denote by $K/\Q$ the splitting field of $f$. By Lemma \ref{lem: hom}, for $p$ sufficiently large there exists an extension ring $A\supset \Z/p^k\Z$ and a ring homomorphism $\phi:\Z[S]\to A$ with the following properties:
\begin{enumerate}
    \item[(i)] $\phi$ induces a bijection between $S$ and the roots of $f$ in $A$.
    \item[(ii)] For any integral $\gamma\in\Z[S]$ with $\phi(\gamma)=0$ we have $p^k|N_{K/\Q}(\gamma)$.
\end{enumerate}
For any $\alpha\in\Z[S]$ we denote $\bar\alpha=\phi(\alpha)$.
By (i) and the assumption $p^k|f(n_i)$, there exist $\alpha_1,\ldots,\alpha_t\in S$ such that $\bar\alpha_i=\bar n_i$. If we take $D\ge 1$ so that $p>N$, the elements $\alpha_1,\ldots,\alpha_t$ are distinct (since the $\bar n_i$ are). 

 By Lemma \ref{lem: int comb} there exist integers $a_1,\ldots,a_t,\,|a_i|\le (tN)^{1/t}+1$, not all zero, such that $u=\sum_{i=1}^ta_in_i$ satisfies $|u|\le (tN)^{1/t}$. 
Denote by $f_d$ the leading coefficient of $f$ and consider the element
\begin{equation}\label{eq: gamma}\gamma=f_d\sum_{i=1}^ta_i(\alpha_i-n_i)=f_d\sum_{i=1}^t a_i\alpha_i-f_du\in\Z[S].\end{equation}
Since $\bar\alpha_i=\bar n_i$ we have $\bar\gamma=0$. Also $\gamma$ is integral over $\Z$ (because each $f_d\alpha_i$ is) so by property (ii) of $\phi$ we have $p^k|N_{K/\Q}(\gamma).$
 
 We view $\Qb$ as a subset of $\C$, the latter equipped with the usual absolute value. Denote $C=\max_{\alpha\in S}|\alpha|$.
 From (\ref{eq: gamma}) we see that any conjugate $\gamma'$ of $\gamma$ satisfies
 $$|\gamma'|\le f_d\sum_{i=1}^t|a_i|C+f_d|u|\le f_d\cdot\left(((tN)^{1/t}+1)C+(tN)^{1/t}\right)$$ and thus
$$|N_{K/\Q}(\gamma)|\le f_d^{|G_f|}(((tN)^{1/t}+1)C+(tN)^{1/t})^{|G_f|}\le (DN)^{\frac{|G_f|}{t}}$$ if we take $D\ge f_d^t(2C+1)^tt$.
 There are two cases to consider.

\case{$\gamma\neq 0$} In this case we have 
$$0<|N_{K/\Q}(\gamma)|<(DN)^{\frac{|G_f|}t}.$$ 
Now since $p^k|N_{K/\Q}(\gamma)$
and $p>DN$, we must have $t<\frac{|G_f|}k$, proving the assertion of the lemma.

\case{$\gamma=0$} Here we need to use the assumptions (from the statement of Theorem \ref{thm: main2}) that each $S_i$ is potent and 
$\dim\langle S\cup\{1\}\rangle=d+1-r$. Throughout the rest of the proof all spans (denoted by $\langle\cdot\rangle$) and linear dependencies are over $\Q$. 

First we claim that we must have $S_i\subset\{\alpha_1,\ldots,\alpha_t\}$ for some $i$. Otherwise there exists $\beta_i\in S_i\setminus\{\alpha_1,\ldots,\alpha_t\}$ for each $1\le i\le r$ and since $S_i$ is potent $\beta_i\in\langle S_i\setminus\{\beta_i\}\rangle$, so we have
$\langle S\cup\{1\}\rangle=\langle \Cup(S_i\setminus\{\beta_i\})\cup\{1\}\rangle$, with the set in the latter span containing $\alpha_1,\ldots,\alpha_t$. But by (\ref{eq: gamma}) and the assumption $\gamma=0$ we have $\sum_{i=1}^ta_i\alpha_i=u$ and thus $\alpha_1,\ldots,\alpha_t,1$ are linearly dependent over $\Q$, so $$\dim\langle S\cup\{1\}\rangle=\dim\Big\langle \Cup_{i=1}^r(S_i\setminus\{\beta_i\})\cup\{1\}\Big\rangle\le\Bigl|\Cup_{i=1}^r(S_i\setminus\{\beta_i\})\cup\{1\}\Bigr|-1\le d-r,$$ a contradiction to the assumption $\dim\langle S\cup\{1\}\rangle=d+1-r$. This establishes our claim that $S_i\subset\{\alpha_1,\ldots,\alpha_t\}$ for some $i$.

Now suppose (after reordering the $\alpha_j$) that $S_i=\{\alpha_{1},\ldots,\alpha_{{s}}\}$. 
Then since $S_i$ is potent we have $$\sum_{l=1}^{s}b_l\alpha_{l}=w\in\Z$$ for some $0< b_l\in\Z$, and hence $\sum_{l=1}^{s}b_l\bar\alpha_{l}=\bar w$. Since $\bar\alpha_{l}=\bar n_{l}$ we obtain $\sum_{l=1}^{s}b_{l}n_{i}-w\equiv 0\pmod {p}$, which is impossible if $D\ge\sum_{l=1}^{s}b_{l}+1$, $p>DN$ and $N$ is sufficiently large, just as in the proof of Lemma \ref{lem: main1} (here we need to use the assumption that $f$ does not have integer roots). We conclude that the assertion of the lemma holds with $D=\max\left(f_d^t(2C+1)^tt,\sum_{l=1}^sb_l+1\right)$.

\end{proof}

% \begin{lem}\label{lem: main2}
% Let $f\in\Z[x],\,\deg f=d$ be squarefree and without rational roots, $S$ the set of roots of $f$ in $\Qb$, $S_1,\ldots,S_r$ a partition of $S$ with each $S_i$ potent and $r$ satisfying $\dim\langle S\cup\{1\}\rangle=d+1-r$. Denote $e=|\Gal(f/\Q)|$. There exists a constant $D=D(f)>0$ such that for any sufficiently large $N\ge N_0(f)$ and prime $p>DN$, $p$ divides $\prod_{n=1}^Nf(n)$ at most $W_{d,e}$ times, where $U_{d,e}=\sum_{k=1}^d\min\left(d-k,\lceil\frac{e}k\rceil-1\right)$.
% \end{lem}

% \begin{proof} By Lemma \ref{lem: pre lem main2}, for a suitable $D$, sufficiently large $N$ and $p>DN$, among the values $f(1),\ldots,f(N)$ at most $e-1$ are divisible by $p$, at most $\lceil \frac e2\rceil-1$ are divisible by $p^2$ and so forth. By \cite[Proof of Lemma 4.1]{Sah20}, for a suitable $D$, sufficiently large $N$ and $p>DN$, at most $d-k$ of the values $f(1),\ldots,f(N)$ are divisible by $p^k$ for each $1\le k\le d$. Consequently $p$ can divide $\prod_{n=1}^Nf(n)$ at most $$\sum_{k=1}^d\min\left(d-k,\Big\lceil\frac ek\Big\rceil-1\right)=U_{d,e}$$ times.
% \end{proof}

% \begin{rem} In the special case $f=x^{2^m}+1$, Baier and Dey \cite[Theorem 9]{BaDe20} proved a sharper version of Lemma \ref{lem: main2} with $U_{2^m}=U_{2^m,2^m}$ replaced by $m2^{m-1}+1$, which is smaller for $m\ge 3$ (for large $m$ we have $U_{2^m}\sim (\log 2)m2^m$). They use a different method which substantially relies on the structure of the cyclotomic field and the specific degree $2^m$.\end{rem}

\begin{proof}[Proof of Theorem \ref{thm: main2}] Assume the setup of Theorem \ref{thm: main2}. By Lemma \ref{lem: pre lem main2} combined with Corollary \ref{cor: sah} we may apply Proposition \ref{prop: main2} with $\delta_k=\min(d-k,\lceil e/k\rceil-1)$ and conclude that
$$\log L_f(N)\gtrsim \frac{d-1}{U_{d,e}}N\log N,$$
where $U_{d,e}$ is given by (\ref{eq: def U_e}). \end{proof}

\bibliography{mybib}
\bibliographystyle{alpha}

\end{document}